\numberwithin{equation}{section}
\theoremstyle{plain}
\newtheorem{theorem}{Theorem}[section]
\newtheorem{proposition}{Proposition}[section]
\newtheorem{lemma}{Lemma}[section]
\newtheorem{corollary}{Corollary}[section]
\theoremstyle{definition}
\newtheorem{definition}{Definition}[section]
\newtheorem{example}{Example}[section]
\newtheorem{remark}{Remark}[section]
\def\Gwa{$\mathsf{{Gr}^{\bullet}}$}
\def\im{\operatorname{Im}}
\def\Aut{\operatorname{Aut}}
\def\St{\operatorname{St}}
\def\cok{\operatorname{coker}}
\def\wt{\widetilde}
\begin{document}
\title{\bf Coverings and liftings of generalized crossed modules}

\author[ ]{Gamze AYTEKİN ARICI}
\author[ ]{Tunçar ŞAHAN\thanks{\textbf{Corresponding author: }\texttt{tuncarsahan@gmail.com} (T. Şahan)}}
\affil[ ]{\small{Department of Mathematics, Aksaray University, Aksaray, Turkey}}
%\affil[1]{\small{\textit{tuncarsahan@gmail.com}}} 
%\affil[2]{\small{\textit{gamzeaytekin35@gmail.com}}}

\date{}

\maketitle

\begin{abstract}
In the theory of crossed modules, considering arbitrary self-actions instead of conjugation allows for the extension of the concept of crossed modules and thus the notion of generalized crossed module emerges. In this paper we give a precise definition for generalized cat$^1$-groups and obtain a functor from the category of generalized cat$^1$-groups to generalized crossed modules. Further, we introduce the notions of coverings and liftings for generalized crossed modules and investigate properties of these structures. Main objective of this study is to obtain an equivalence between the category of coverings and the category of liftings of a given generalized crossed module $(A,B,\alpha)$.
\end{abstract}

%\tableofcontents

\section{Introduction}
Crossed modules are defined by Whitehead \cite{Whitehead1946} as an  algebraic model for homotopy 2-types and have been widely used in different areas of mathematics (see for example  \cite{Brown2011,Brown1982a,Huebschmann1980,Loday1978,Lue1979}). Crossed modules can be seen as 2-dimensional groups \cite{Brown1982a}. It is a well-known fact that the category of crossed modules over groups and the category of group-groupoids, which also known as 2-groups, are naturally equivalent \cite{Brown1976,Loday1982}. Using this equivalence many properties and structures known in one of these categories are interpreted in other one. Some examples for this interpretations are normality and quotient objects \cite{Mucuk2015}, coverings \cite{Akiz2013} and actions \cite{Mucuk2019}. These studies also give rise to similar studies on higher dimensional crossed modules \cite{Demir2021,Temel2020,Temel2020a} and on topological crossed modules \cite{Mucuk2020,Mucuk2014}.

In 1987, Porter \cite{Porter1987} gave a general definition of crossed modules in certain algebraic categories namely categories of groups with operations. See also \cite{Orzech1972,Orzech1972a} for an extensive research on categories of groups with operations and on categories of interest. As a special case $(\mathcal{C}=\mathbf{Gp})$ of Porter's definition for crossed modules, a group homomorphism $\alpha\colon A\rightarrow B$ with a derived action of $B$ on $A$ is a crossed module if the following diagram of split extensions is commutative in the category of groups
\[\xymatrix{
\bm{\mathsf{0}} \ar[r] &   A \ar@{->}[r]^-{\iota}\ar@{->}[d]_{1_{A}}  &   A\rtimes A \ar@{->}[r]_-{p} \ar@{->}[d]_{1_{A}\times\alpha} & A \ar@/_/[l]_-{s} \ar[r] \ar@{->}[d]^{\alpha} & \bm{\mathsf{0}} \\
\bm{\mathsf{0}} \ar[r] & A \ar@{->}[r]^-{\iota} \ar@{->}[d]_{\alpha} &   A\rtimes B \ar@{->}[r]_-{p} \ar@{->}[d]_{\alpha\times 1_{B}} & B \ar@{->}[d]^{1_{B}} \ar@/_/[l]_-{s} \ar[r] & \bm{\mathsf{0}}\\
\bm{\mathsf{0}} \ar[r] & B \ar@{->}[r]^-{\iota}  & B\rtimes B \ar@{->}[r]_-{p}  &  B \ar@/_/[l]_-{s} \ar[r]  & \bm{\mathsf{0}} }\]
where the top and bottom rows are constructed from conjugation actions on $A$ and on $B$, respectively, and the middle row constructed from the derived action of $B$ on $A$. Referring to the conventional definition of crossed modules, conjugation actions are used on both groups. Recently, Yavari and Salemkar \cite{Yavari2019} examined the effects of taking arbitrary actions on both groups instead of conjugation actions in the definition of crossed modules. They named these structures by generalized crossed modules since every crossed module is a generalized one. 

In this study, with a similar thought we define generalized cat$^1$-groups and a functor from the category of generalized cat$^1$-groups to that of generalized crossed modules. Moreover, we introduce the notions of coverings and liftings for generalized crossed modules and investigate the properties of these kind of structures. Finally, we prove that the category of covering generalized crossed modules and the category of lifting generalized crossed modules are naturally equivalent for a fixed generalized crossed module $(A,B,\alpha)$.

\section{Preliminaries}

A \textit{crossed $B$-module of groups} (or briefly a \textit{crossed module}) is a group homomorphism $\alpha\colon A\rightarrow B$ with a (left) group action of $B$ on $A$, denoted by $b\cdot a$, satisfying
\begin{enumerate}[label={(\roman{*})}, leftmargin=1cm]
	\item $\alpha(b\cdot a)=b+\alpha(a)-b$,
	\item $\alpha(a)\cdot a_1=a+a_1-a$
\end{enumerate}
for all $a\in A$ and $b\in B$. 

Such a crossed module is denoted by $(A,B,\alpha)$.

\begin{example}\label{ex:xmod}
	Following are the very well-known examples for crossed modules.
\begin{enumerate}[label=(\roman{*}), leftmargin=1cm]
	\item Any group $G$ and its any normal subgroup $N$ defines a crossed module with the embedding $N\hookrightarrow G$ with the conjugation action of $G$ on $N$. As a special case of this example $(G,G,1_G)$ becomes a crossed module.
	
	\item Let $M$ be a $G$-module. Then  $0\colon M\rightarrow G$, the zero map, has a structure of crossed module.
	
	\item Automorphism group $\Aut(G)$ of a given group $G$ acts on $G$ in a very natural way, say $f\cdot g=f(g)$ for all $f\in\Aut(G)$ and $g\in G$. In this case the inner automorphism map $G\rightarrow Aut(G)$ becomes a crossed module.
	
	\item\label{ex:xmodtop} If $X$ is a topological group, then the fundamental group $\pi X$ is a group-groupoid, the star  $\St_{\pi X}0$ at the identity $0\in X$ becomes a group and the final point map $d_1\colon\St_{\pi X}0 \rightarrow X$ becomes a crossed module.
	
	\item The origin of crossed modules is based on this example due to Whitehead \cite{Whitehead1946,Whitehead1949} if $X$ is  topological space and $A\subseteq X$ with $x\in A$, then  there is a natural action of $\pi_1(A,x)$ on second relative homotopy group $\pi_2(X,A,x)$ and with this action the boundary map \[\partial\colon\pi_2(X,A,x)\rightarrow \pi_1(A,x)\] becomes a crossed module. This crossed module is called {\em fundamental crossed module} and denoted by $\Pi(X,A,x)$ (see for example \cite{Brown2011} for more details).
\end{enumerate}
\end{example}

Let $(A,B,\alpha)$ and $(A',B',\alpha')$ be two crossed modules and let $f\colon A\rightarrow A'$ and $g\colon B\rightarrow B'$ be two group homomorphisms. If $g\alpha=\alpha' f$ and $f(b\cdot a)=g(b)\cdot f(a)$ for all $a\in A$ and $B\in B$ then $\langle f,g\rangle $ is called a morphism of crossed modules. The conditions for crossed module morphisms mean that the diagram below is commutative.
\[\xymatrix{
	B\times A \ar[d]_{g\times f} \ar[r]^-{\cdot} & A \ar[d]_{f} \ar[r]^-{\alpha}  & B \ar[d]^-{g}  \\
	B'\times A'  \ar[r]_-{\cdot} & A' \ar[r]_-{\alpha'}  & B' }
\]
Crossed modules and crossed module morphisms given above forms a category. We will write $\mathsf{XMod}$ for the category of crossed modules.

It is a well-known fact that the category of crossed modules and of group-groupoids, which are categories internal to groups, are naturally equivalent \cite{Brown1976}. 

Now we recall the definition of a generalized crossed module from \cite{Yavari2019}. First we need to remind the notion of group with action due to Datuashvili \cite{Datuashvili2002a,Datuashvili2004}.

\begin{definition}\cite{Datuashvili2002a}
	A  group with action is a group $G$ with an (left) action on itself.
\end{definition} 

The action of $G$ on itself will be denoted by
\[ \begin{array}{ccl}
	G\times G & \longrightarrow & G                 \\
	\left(g_1,g_2\right) & \longmapsto     & {}^{g_{1}}{g_{2}}
\end{array}\] 
for all $g_1,g_2\in G$.

A morphism $f$ between groups with action $G$ and $H$ is a group homomorphism $f\colon G\rightarrow H$ such that $f({}^{g}{g_1})={}^{f(g)}{f(g_1)}$ for all $g,g_1\in G$, i.e. $f$ preserves the self-action. The category of groups with action and morphisms between them is denoted by \Gwa{} \cite{Datuashvili2002a}.

\begin{definition}\cite{Datuashvili2002a}
A subgroup $H$ of a group with action $G$ is called a subobject of $G$ if $H$ is closed under the action of $G$ on itself.
\end{definition}

\begin{definition}\cite{Datuashvili2002a}\label{def:ideal}
Let $N$ be a subobject of a group with action $G$. Then $N$ is called an ideal of $G$ if 
\begin{enumerate}[label=(\roman{*}), leftmargin=1cm]
\item\label{cond:ideal1} $N$ is a normal subgroup of $G$,
\item\label{cond:ideal2} ${}^{g}{n}\in N$ and
\item\label{cond:ideal3} ${}^{n}{g}-g\in N$
\end{enumerate}
for all $g\in G$ and $n\in N$.
\end{definition}

It has been shown in \cite{Datuashvili2002a} that the quotient group $G/N$ is an object in \Gwa{} with the self-action given by
\[{}^{(g+N)}{(g_1+N)}=({}^{g}{g_1})+N\]
for all $g,g_1\in G$.

\subsection{Generalized crossed modules}

\begin{definition}\label{def:genxmod}
Let $A$ and $B$ be two objects in \Gwa{}, let $\alpha\colon A\rightarrow B$ be a group homomorphism and let there is a group action of $B$ on $A$ which is denoted by $b\cdot a$ for $a\in A$ and $b\in B$. Then $(A,B,\alpha)$ is called a \textbf{generalized crossed module}, if
 \begin{enumerate}[label=(\roman{*}), leftmargin=1cm]
 	\item\label{defcond:genxmod1} $\alpha(b\cdot a)={}^{b}{\alpha(a)}$,
 	\item\label{defcond:genxmod2} $\alpha(a)\cdot a_1={}^{a}{a_1}$
 \end{enumerate}
for all $a,a_1\in A$ and $b\in B$ \cite{Yavari2019}.
\end{definition}

\begin{remark}
It is easy to see that for a generalized crossed module $(A,B,\alpha)$
\begin{alignat*}{2}
	\alpha({}^{a}{a_1})&=\alpha(\alpha(a)\cdot a_1) \tag{by (ii)}\\
	&={}^{\alpha(a)}{\alpha(a_1)} \tag{by (i)}
\end{alignat*}
for all $a,a_1\in A$, that is, the group homomorphism $\alpha$ is in fact a morphism in \Gwa{}.
\end{remark}

A generalized crossed module $(A,B,\alpha)$ is called \textbf{aspherical} if $\ker \alpha=0$ and \textbf{simply connected} if $\cok \alpha=0$. In other words, $(A,B,\alpha)$ is an aspherical generalized crossed module if $\alpha$ is injective, and a simply connected generalized crossed module if $\alpha$ is surjective. 

\begin{example}
	If the self-actions on $A$ and on $B$  are conjugation action then a generalized crossed module $(A,B,\alpha)$ becames a usual crossed module. So every crossed module is a generalized crossed module.
\end{example} 

\begin{example}
	Let $A$ and $B$ be two groups. Assume that all actions, i.e. of $A$ on itself, of $B$ on itself and on $A$, are trivial. Then for any group homomorphism $\alpha\colon A\rightarrow B$ the triple $(A,B,\alpha)$ is a generalized crossed module.
\end{example}

\begin{example}\label{ex:subgrpgxmod}
Let $G$ be an object in \Gwa{} and let $H$ be a subgroup of $G$ such that ${}^{g}{h}\in H$ for all $g\in G$ and $h\in H$. Then $(H,G,i)$ becomes an aspherical generalized crossed module where the action of $G$ on $H$ is the induced one from the self-action of $G$ and $i\colon H\rightarrow G$ is the inclusion map. In particular for any characteristic subgroup $K$ of $G$ the triple $(K,G,i)$ is a generalized crossed module. It can be seen that if $N$ is an ideal of $G$, as defined in Definition \ref{def:ideal}, then $(N,G,i)$ is a generalized crossed module.
\end{example}

This example is a generalization of normal subgroup crossed module where the action is conjugation.

\begin{lemma}
Let $(A,B,\alpha)$ be a generalized crossed module. Then $(\ker \alpha,A,i)$ becomes a generalized crossed module as in Example \ref{ex:subgrpgxmod}
\end{lemma}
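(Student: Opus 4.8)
The plan is to recognize that the statement is really just an instance of Example \ref{ex:subgrpgxmod} with $G=A$ and $H=\ker\alpha$, so the whole task reduces to verifying that $\ker\alpha$ meets the hypotheses of that example. Concretely, I need to check two things: first, that $\ker\alpha$ is a subgroup of $A$; and second, that it is stable under the self-action of $A$, i.e. that ${}^{a}{k}\in\ker\alpha$ for every $a\in A$ and every $k\in\ker\alpha$. Once both are in hand, Example \ref{ex:subgrpgxmod} applies directly and yields that $(\ker\alpha,A,i)$ is a generalized crossed module, with the action of $A$ on $\ker\alpha$ induced by restricting the self-action of $A$ and $i\colon\ker\alpha\to A$ the inclusion; it is moreover aspherical, since $i$ is injective and hence $\ker i=0$.

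The first check is immediate: $\alpha$ is a group homomorphism, so its kernel is a subgroup of $A$. For the second, I would fix $a\in A$ and $k\in\ker\alpha$ and compute $\alpha({}^{a}{k})$ directly. By the Remark following Definition \ref{def:genxmod}, the homomorphism $\alpha$ is in fact a morphism in \Gwa{}, hence it preserves the self-action, so that
\[\alpha({}^{a}{k})={}^{\alpha(a)}{\alpha(k)}.\]
Since $k\in\ker\alpha$ we have $\alpha(k)=0$, and because the self-action of $B$ is by group automorphisms each map ${}^{b}{(-)}$ fixes the identity, giving ${}^{\alpha(a)}{0}=0$. Therefore $\alpha({}^{a}{k})=0$, i.e. ${}^{a}{k}\in\ker\alpha$, which is exactly the required stability.

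The argument is short, and the one point deserving comment is the equality ${}^{\alpha(a)}{0}=0$: it is not a formal consequence of the bare group-action axioms, but relies on the standing convention in \Gwa{} that each element acts as an automorphism (equivalently, that the self-action is additive in its second argument, as it must be for the semidirect products appearing in the definition to be groups), so that the identity is preserved. I expect this to be the only step that is not purely routine; with it established, invoking Example \ref{ex:subgrpgxmod} completes the proof.
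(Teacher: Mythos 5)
Your proof is correct and follows essentially the same route as the paper's: reduce to Example \ref{ex:subgrpgxmod} and verify stability of $\ker\alpha$ under the self-action via the computation $\alpha({}^{a}{k})={}^{\alpha(a)}{\alpha(k)}={}^{\alpha(a)}{0}=0$. Your explicit justification of the step ${}^{\alpha(a)}{0}=0$ (each element acting as a group automorphism, hence fixing the identity) is a point the paper takes for granted, and your citation of the Remark after Definition \ref{def:genxmod} for $\alpha$ preserving the self-action is likewise a welcome precision.
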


\begin{proof}
Since $\ker \alpha$ is a subgroup of $A$ then it is sufficient to prove that ${}^{a}{x}\in \ker\alpha$ for all $a\in A$ and $x\in \ker\alpha$. $x\in \ker\alpha$ implies that $\alpha(x)=0$. Thus $\alpha({}^{a}{x})={}^{\alpha(a)}{\alpha(x)}={}^{\alpha(a)}{0}=0$ and hence ${}^{a}{x}\in \ker\alpha$.
\end{proof}

\begin{lemma}
Let $(A,B,\alpha)$ be a generalized crossed module and let $K:=\alpha(A)$. Then $(K,B,i)$ becomes a generalized crossed module as in Example \ref{ex:subgrpgxmod}
\end{lemma}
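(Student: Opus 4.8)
The plan is to verify that the pair $(K,B)$ satisfies the hypotheses of Example~\ref{ex:subgrpgxmod} with $G=B$ and $H=K=\alpha(A)$, and then to invoke that example directly. Concretely, two things must be checked: that $K$ is a subgroup of $B$, and that $K$ is closed under the self-action of $B$, i.e. that ${}^{b}{k}\in K$ for all $b\in B$ and $k\in K$.

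The first point is immediate: $K=\alpha(A)$ is the image of the group homomorphism $\alpha\colon A\rightarrow B$, hence a subgroup of $B$. For the second, the closure under the self-action, I would take an arbitrary element $k\in K$ and write it as $k=\alpha(a)$ for some $a\in A$. Then for any $b\in B$, the defining condition \ref{defcond:genxmod1} of a generalized crossed module gives ${}^{b}{k}={}^{b}{\alpha(a)}=\alpha(b\cdot a)$. Since $b\cdot a$ is again an element of $A$, its image $\alpha(b\cdot a)$ lies in $\alpha(A)=K$, and so ${}^{b}{k}\in K$, as required.

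With both hypotheses verified, Example~\ref{ex:subgrpgxmod} applies and yields that $(K,B,i)$ is a generalized crossed module, where $i\colon K\hookrightarrow B$ is the inclusion and the action of $B$ on $K$ is the one induced from the self-action of $B$. This argument is the analogue, on the codomain side, of the preceding lemma for $(\ker\alpha,A,i)$: there the derived identity $\alpha({}^{a}{x})={}^{\alpha(a)}{\alpha(x)}$ (coming from condition \ref{defcond:genxmod2}) was used to keep the kernel invariant, whereas here it is condition \ref{defcond:genxmod1} that transports the self-action of $B$ on $K$ back through $\alpha$. I do not expect any genuine obstacle: the only real content is the single application of \ref{defcond:genxmod1}, and the remaining assertions (that $K$ is a subgroup and that the induced action is well defined) are routine.
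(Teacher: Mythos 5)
Your proposal is correct and matches the paper's own proof essentially step for step: both establish that $K=\alpha(A)$ is a subgroup of $B$, then take $k=\alpha(a)$ and apply condition \ref{defcond:genxmod1} to get ${}^{b}{k}={}^{b}{\alpha(a)}=\alpha(b\cdot a)\in\alpha(A)=K$, so that Example \ref{ex:subgrpgxmod} applies. No gaps; the closing comparison with the kernel lemma is accurate but not needed for the argument.
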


\begin{proof}
It is well known that $K$ is a subgroup of $B$. We only need to show that ${}^{b}{k}\in K$ for all $b\in B$ and $k\in K$. There is an element $a$ in $A$ such that $\alpha(a)=k$ since $K:=\alpha(A)$. Then ${}^{b}{k}={}^{b}{\alpha(a)}=\alpha(b\cdot a)\in \alpha(A)=K$. This completes the proof.
\end{proof}

\begin{lemma}
Let $(A,B,\alpha)$ be a generalized crossed module. Then $\ker \alpha$ acts trivially on $A$.
\end{lemma}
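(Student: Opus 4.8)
The plan is to first pin down what the action of $\ker\alpha$ on $A$ means. Since $\ker\alpha$ is a subgroup of $A$ and $A$ is an object in \Gwa{}, the natural action in play is the self-action of $A$ restricted to $\ker\alpha$; that is, an element $x\in\ker\alpha$ acts on $a\in A$ by ${}^{x}{a}$. So the goal reduces to proving ${}^{x}{a}=a$ for all $x\in\ker\alpha$ and $a\in A$.

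Fixing such $x$ and $a$, the key move is to rewrite the self-action via condition \ref{defcond:genxmod2} of Definition \ref{def:genxmod}. Taking $a$ to be $x$ in that identity gives
\[
{}^{x}{a}=\alpha(x)\cdot a.
\]
Now I would use that $x\in\ker\alpha$ means $\alpha(x)=0$, the identity element of $B$. Because $b\cdot a$ is by hypothesis a group action of $B$ on $A$, the identity $0\in B$ acts as the identity map, so $\alpha(x)\cdot a=0\cdot a=a$. Combining the two displayed facts yields ${}^{x}{a}=a$, which is exactly the claim.

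The only point deserving care is the identification in the first paragraph: one must recognize that the relevant action of $\ker\alpha$ is the self-action of $A$ inherited by the subgroup $\ker\alpha$, and that condition \ref{defcond:genxmod2} is precisely the bridge converting this self-action into the $B$-action through $\alpha$. Once that identification is made, there is no genuine obstacle; the argument is a one-line application of axiom \ref{defcond:genxmod2} together with the unitality of the $B$-action.
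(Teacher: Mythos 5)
Your proposal is correct and matches the paper's own proof exactly: both rewrite ${}^{x}{a}$ as $\alpha(x)\cdot a$ via condition (ii) of Definition \ref{def:genxmod} and then use $\alpha(x)=0$ together with unitality of the $B$-action to conclude ${}^{x}{a}=a$. Your preliminary paragraph pinning down that the action in question is the restricted self-action is a worthwhile clarification the paper leaves implicit, but the core argument is identical.
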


\begin{proof}
Let $x\in \ker \alpha$, i.e. $\alpha(x)=0$ and let $a\in A$. Then ${}^{x}{a}=\alpha(x)\cdot a= 0\cdot a=a$. This completes the proof.
\end{proof}

Let $(A,B,\alpha)$ and $(A',B',\alpha')$ be two generalized crossed modules and let $f\colon A\rightarrow A'$ and $g\colon B\rightarrow B'$ be two group homomorphisms. If $g\alpha=\alpha' f$ and $f(b\cdot a)=g(b)\cdot f(a)$ for all $a\in A$ and $B\in B$ then $\langle f,g\rangle $ is called a morphism of generalized crossed modules. All generalized crossed modules and morphisms between them as defined above forms a category. We will write $\mathsf{\bf GXMod}$ for the category of generalized crossed modules.

\begin{lemma}\label{lem:isogxmod1}
For any generalized crossed module $(A,B,\alpha)$ let $f\colon B\rightarrow B'$ be an isomorphism in \Gwa{}. Then $(A,B',\widetilde{\alpha})$ is a generalized crossed module where $\widetilde{\alpha}=f\alpha$ and the action of $B'$ on $A$ is given by \[b'\cdot a=\left( f^{-1}(b')\right)\cdot a\] for $b'\in B'$ and $a\in A$.  
\end{lemma}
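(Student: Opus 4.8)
The plan is to verify directly that the triple $(A,B',\widetilde{\alpha})$ satisfies both defining conditions of Definition \ref{def:genxmod}, after first checking that all the ingredients are well-defined. Since $f$ is an isomorphism in \Gwa{}, the target $B'$ is already an object of \Gwa{}, and $\widetilde{\alpha}=f\alpha$ is a composite of group homomorphisms, hence a group homomorphism. First I would confirm that the prescription $b'\cdot a=(f^{-1}(b'))\cdot a$ really defines a (left) group action of $B'$ on $A$: this follows because $f^{-1}\colon B'\to B$ is a group homomorphism and the original $B$-action on $A$ is one, so the identity of $B'$ acts trivially and $(b_1'+b_2')\cdot a=b_1'\cdot(b_2'\cdot a)$ upon applying $f^{-1}$ and invoking the corresponding property of the $B$-action.

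For the first defining condition I must show $\widetilde{\alpha}(b'\cdot a)={}^{b'}{\widetilde{\alpha}(a)}$. I would expand the left-hand side as $f\alpha((f^{-1}(b'))\cdot a)$ and then apply condition \ref{defcond:genxmod1} for the original generalized crossed module, with $b=f^{-1}(b')$, to rewrite $\alpha((f^{-1}(b'))\cdot a)$ as ${}^{f^{-1}(b')}{\alpha(a)}$. Applying $f$ and using that $f$ preserves the self-action then yields ${}^{f(f^{-1}(b'))}{f(\alpha(a))}={}^{b'}{\widetilde{\alpha}(a)}$, as required. This is precisely the step where the hypothesis that $f$ is a morphism in \Gwa{}, and not merely a group isomorphism, is used in full.

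For the second condition I must show $\widetilde{\alpha}(a)\cdot a_1={}^{a}{a_1}$. Here the left-hand side unwinds, by the definition of the new action with $b'=f\alpha(a)$, to $(f^{-1}(f\alpha(a)))\cdot a_1=\alpha(a)\cdot a_1$, using only that $f^{-1}f=1_B$; condition \ref{defcond:genxmod2} for $(A,B,\alpha)$ then gives ${}^{a}{a_1}$ immediately.

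Overall this is a routine verification rather than a statement with a genuine obstacle. The one point that deserves care is to notice exactly where each hypothesis on $f$ is consumed: condition (i) relies on the self-action--preserving property of $f$ (so the assumption that $f$ is a \Gwa{} morphism is essential), whereas both the well-definedness of the induced action and condition (ii) use only that $f^{-1}$ is a two-sided inverse group homomorphism.
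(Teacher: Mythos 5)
Your proposal is correct and takes essentially the same route as the paper's proof: a direct verification of conditions (i) and (ii) of Definition \ref{def:genxmod}, using the self-action--preserving property of $f$ for (i) and $f^{-1}f=1_B$ for (ii), with identical computations. The only (harmless) addition is your explicit check that $b'\cdot a=(f^{-1}(b'))\cdot a$ defines a group action, which the paper leaves implicit.
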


\begin{proof}
We need to show that $(A,B',\widetilde{\alpha})$ satisfies the conditions \ref{defcond:genxmod1} and \ref{defcond:genxmod2} of Definition \ref{def:genxmod}.
\begin{enumerate}[label=(\roman{*}), leftmargin=1cm]
\item Let $a\in A$ and $b'\in B'$. Then
\begin{alignat*}{2}
	\widetilde{\alpha}(b'\cdot a) & = \widetilde{\alpha}(f^{-1}(b')\cdot a) \\
	& = f\left( {\alpha}(f^{-1}(b')\cdot a) \right) \\
	& = f\left({}^{f^{-1}(b')}{\alpha(a)}\right) \\
	& = {}^{ff^{-1}(b')}{f\alpha(a)} \\
	& = {}^{b'}{f\alpha(a)} \\
	& = {}^{b'}{\widetilde{\alpha}(a)}.
\end{alignat*}
\item Let $a,a_1\in A$. Then
\begin{alignat*}{2}
	\widetilde{\alpha}(a)\cdot a_1 & = f^{-1}(\widetilde{\alpha}(a))\cdot a_1 \\
	& = f^{-1}(f{\alpha}(a))\cdot a_1 \\
	& = \alpha(a)\cdot a_1 \\
	& = {}^{a}{a_1}.
\end{alignat*}
\end{enumerate}
Thus $(A,B',\widetilde{\alpha})$ is a generalized crossed module.
\end{proof}

In this case $(A,B',\widetilde{\alpha})$ is isomorphic to $(A,B,\alpha)$.

\begin{lemma}\label{lem:isogxmod2}
Let $(A,B,\alpha)$ be a generalized crossed module and let $g\colon A'\rightarrow A$ be an isomorphism in \Gwa{}. Then $(A',B,\widehat{\alpha})$ is a generalized crossed module where $\widehat{\alpha}=\alpha g$ and the action of $B$ on $A'$ is given by \[b\cdot a'=g^{-1}\left(b\cdot g(a')\right)\] for $b\in B$ and $a'\in A'$.  
\end{lemma}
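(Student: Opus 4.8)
The plan is to verify that the triple $(A',B,\widehat{\alpha})$ satisfies the two axioms \ref{defcond:genxmod1} and \ref{defcond:genxmod2} of Definition \ref{def:genxmod}, exactly as was done in the proof of Lemma \ref{lem:isogxmod1}; this statement is the mirror image of that lemma, with the isomorphism now applied on the $A$-side rather than the $B$-side. Before checking the axioms I would note (as essentially routine) that the prescribed formula $b\cdot a'=g^{-1}(b\cdot g(a'))$ really defines a left action of $B$ on $A'$: since $g$ and $g^{-1}$ are group homomorphisms and each $b$ acts on $A$ by an automorphism, one obtains $0\cdot a'=a'$, additivity in $a'$, and $(b+b_1)\cdot a'=b\cdot(b_1\cdot a')$ by inserting $gg^{-1}$ and invoking the corresponding facts for the action of $B$ on $A$.

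For axiom \ref{defcond:genxmod1}, I would compute $\widehat{\alpha}(b\cdot a')=\alpha g\big(g^{-1}(b\cdot g(a'))\big)=\alpha(b\cdot g(a'))$, cancelling $gg^{-1}$, and then apply axiom \ref{defcond:genxmod1} for the original generalized crossed module $(A,B,\alpha)$ to rewrite this as ${}^{b}{\alpha(g(a'))}={}^{b}{\widehat{\alpha}(a')}$. This step is purely formal and uses only that $g$ is a bijective homomorphism.

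For axiom \ref{defcond:genxmod2}, I would start from $\widehat{\alpha}(a')\cdot a_1'=(\alpha g(a'))\cdot a_1'=g^{-1}\big(\alpha(g(a'))\cdot g(a_1')\big)$ by the definition of the transported action, and then invoke axiom \ref{defcond:genxmod2} for $(A,B,\alpha)$ with $a=g(a')$ and $a_1=g(a_1')$ to reach $g^{-1}\big({}^{g(a')}{g(a_1')}\big)$. The key point, and the only place where more than the bijectivity of $g$ is needed, is that $g$ is an isomorphism in \Gwa{}: this forces $g^{-1}$ to preserve the self-action as well, so that $g^{-1}\big({}^{g(a')}{g(a_1')}\big)={}^{g^{-1}g(a')}{g^{-1}g(a_1')}={}^{a'}{a_1'}$, as required.

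The main (indeed essentially the only) obstacle is the verification of axiom \ref{defcond:genxmod2}: one must use that an isomorphism in \Gwa{} is not merely a group isomorphism but one whose inverse also respects the self-action, which is exactly what makes the right-hand side collapse to ${}^{a'}{a_1'}$. Everything else is bookkeeping of the same kind already displayed in Lemma \ref{lem:isogxmod1}.
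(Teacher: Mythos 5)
Your proposal is correct and follows essentially the same route as the paper's own proof: verify axiom (i) by cancelling $gg^{-1}$ and invoking axiom (i) for $(A,B,\alpha)$, and verify axiom (ii) by transporting via $g$, applying axiom (ii) for $(A,B,\alpha)$ with $a=g(a')$, $a_1=g(a_1')$, and then using that $g^{-1}$ preserves the self-action because $g$ is an isomorphism in \Gwa{} --- exactly the computation displayed in the paper. Your preliminary check that $b\cdot a'=g^{-1}\left(b\cdot g(a')\right)$ is a genuine left action is a small addition the paper leaves implicit, and your identification of the \Gwa{}-isomorphism hypothesis as the one essential ingredient in step (ii) is accurate.
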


\begin{proof}
We need to show that $(A',B,\widehat{\alpha})$ satisfies the conditions \ref{defcond:genxmod1} and \ref{defcond:genxmod2} of Definition \ref{def:genxmod}.
\begin{enumerate}[label=(\roman{*}), leftmargin=1cm]
\item Let $a'\in A'$ and $b\in B$. Then
\begin{alignat*}{2}
\widehat{\alpha}(b\cdot a') & = \widehat{\alpha}(g^{-1}\left(b\cdot g(a')\right)) \\
& = {\alpha g}(g^{-1}\left(b\cdot g(a')\right)) \\
& = \alpha\left(b\cdot g(a')\right) \\
& = {}^{b}{\alpha g(a')} \\
& = {}^{b}{\widehat{\alpha}(a')}
\end{alignat*}
\item Let $a',a'_1\in A'$. Then
\begin{alignat*}{2}
\widehat{\alpha}(a')\cdot a'_1 & = g^{-1}\left(\widehat{\alpha}(a')\cdot g(a'_1)\right) \\
& = g^{-1}\left(\alpha(g(a'))\cdot g(a'_1)\right) \\
& = g^{-1}\left({}^{g(a')}{g(a'_1)}\right) \\
& = {}^{g^{-1}g(a')}{g^{-1}g(a'_1)} \\ 
& = {}^{a'}{a'_1}.
\end{alignat*}
\end{enumerate}
Thus $(A',B,\widehat{\alpha})$ is a generalized crossed module.
\end{proof}

In this case $(A',B,\widehat{\alpha})$ is isomorphic to $(A,B,\alpha)$.

\begin{corollary}
Let $(A,B,\alpha)$ be a generalized crossed module and let $f\colon B\rightarrow B'$, $g\colon A'\rightarrow A$ be two isomorphisms in \Gwa{}. Then $(A',B',\gamma)$ is a generalized crossed module where $\gamma=f\alpha g$ and the action of $B'$ on $A'$ is given by \[b'\cdot a'=g^{-1}\left(f^{-1}(b')\cdot g(a')\right)\] for $b'\in B'$ and $a'\in A'$. Moreover $(A,B',\widetilde{\alpha})$, $(A',B,\widehat{\alpha})$, $(A',B',\gamma)$ and $(A,B,\alpha)$ are all isomorphic generalized crossed modules.
\end{corollary}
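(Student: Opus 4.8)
The plan is to obtain $(A',B',\gamma)$ by applying the two preceding lemmas in succession, and then to identify the required isomorphisms explicitly. First I would feed the given generalized crossed module $(A,B,\alpha)$ together with the isomorphism $f\colon B\rightarrow B'$ into Lemma \ref{lem:isogxmod1}, which yields the generalized crossed module $(A,B',\widetilde{\alpha})$ with $\widetilde{\alpha}=f\alpha$ and action $b'\cdot a=f^{-1}(b')\cdot a$. Next I would apply Lemma \ref{lem:isogxmod2} to this new generalized crossed module $(A,B',\widetilde{\alpha})$ with the isomorphism $g\colon A'\rightarrow A$; this produces $(A',B',\gamma)$ with $\gamma=\widetilde{\alpha}g=(f\alpha)g=f\alpha g$, as required.

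The only point needing verification is that the action obtained from this two-step construction agrees with the formula in the statement. By Lemma \ref{lem:isogxmod2} the action of $B'$ on $A'$ is $b'\cdot a'=g^{-1}(b'\cdot g(a'))$, where the inner action $b'\cdot g(a')$ is the action of $B'$ on $A$ produced in the first step, namely $f^{-1}(b')\cdot g(a')$. Substituting gives $b'\cdot a'=g^{-1}\left(f^{-1}(b')\cdot g(a')\right)$, which is exactly the displayed formula. Since both lemmas guarantee that their outputs are generalized crossed modules, $(A',B',\gamma)$ is automatically a generalized crossed module and no axioms need to be rechecked by hand.

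For the \emph{moreover} claim I would use that isomorphism of generalized crossed modules is an equivalence relation, so it suffices to connect each of the four objects to $(A,B,\alpha)$. The remarks following Lemmas \ref{lem:isogxmod1} and \ref{lem:isogxmod2} already give $(A,B',\widetilde{\alpha})\cong(A,B,\alpha)$ via $\langle 1_A,f\rangle$ and $(A',B,\widehat{\alpha})\cong(A,B,\alpha)$ via $\langle g,1_B\rangle$. For $(A',B',\gamma)$ I would exhibit the pair $\langle g,f^{-1}\rangle$ and check the two morphism conditions: the computation $f^{-1}\gamma=f^{-1}(f\alpha g)=\alpha g$ gives commutativity with the boundary maps, while $g(b'\cdot a')=g\left(g^{-1}\left(f^{-1}(b')\cdot g(a')\right)\right)=f^{-1}(b')\cdot g(a')$ shows compatibility with the actions. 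As $g$ and $f^{-1}$ are isomorphisms in \Gwa{}, this pair is an isomorphism $(A',B',\gamma)\to(A,B,\alpha)$, and transitivity then yields the mutual isomorphism of all four objects.

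I expect no substantial obstacle, since the content is entirely a bookkeeping composition of the two lemmas. The only place demanding care is tracking the action through both steps so that the inner and outer applications of $f^{-1}$ and $g^{-1}$ are composed in the correct order; once this is settled, the isomorphism data follow immediately.
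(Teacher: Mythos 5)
Your proposal is correct and matches the argument the paper intends: the corollary is stated without proof precisely because it follows by composing Lemma \ref{lem:isogxmod1} and Lemma \ref{lem:isogxmod2}, which is exactly what you do, and your tracking of the two-step action $b'\cdot a'=g^{-1}\left(f^{-1}(b')\cdot g(a')\right)$ is the right verification. Your explicit isomorphism pairs $\langle 1_A,f\rangle$, $\langle g,1_B\rangle$ and $\langle g,f^{-1}\rangle$ check out and in fact supply detail the paper leaves implicit.
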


\subsection{Generalized cat$^{1}$-groups}

\begin{definition}\label{def:gencat1gp}
Let $G$ be an object in \Gwa{} and $s,t\colon G\rightarrow G$ be two morphisms in \Gwa{}. Then $(G,s,t)$ is called a \textbf{generalized cat$^1$-group} if
\begin{enumerate}[label={(\roman{*})}, leftmargin=1cm]
	\item\label{defcond:gencat1gp1} $st=t$, $ts=s$ and
	\item\label{defcond:gencat1gp2} ${}^{y}{x}=x$ for all $x\in\ker s$ and $y\in\ker t$.
\end{enumerate}
\end{definition}

\begin{remark}
Here note that for a generalized cat$^1$-group $(G,s,t)$ if the action of $G$ on itself is conjugation then ${}^{y}{x}=y+x-y=x$ and hence $y+x=x+y$ for all $x\in\ker s$ and $y\in\ker t$. This means that $(G,s,t)$ is an ordinary cat$^1$-group. Thus every cat$^1$-group is a generalized cat$^1$-group where the self action is conjugation.
\end{remark}

\begin{example}
Let $G$ be group with trivial action on itself. Then $(G,1_G,1_G)$ is a generalized cat$^1$-group.
\end{example}

Let $(G,s,t)$ and $(G',s',t')$ be two generalized cat$^1$-groups and let $f\colon G\rightarrow G'$ a morphism in \Gwa{}. If $fs=s'f$ and $ft=t'f$ then $f$ is called a morphism of generalized cat$^1$-groups.

\[\xymatrix{
	 G \ar[d]_{f} \ar@<0.5ex>[r]^-{s} \ar@<-0.5ex>[r]_-{t} & G \ar[d]^-{f}  \\
	 G' \ar@<0.5ex>[r]^-{s'} \ar@<-0.5ex>[r]_-{t'}  & G' }
\]

The category of all generalized cat$^1$-groups and morphisms between them is denoted by $\mathsf{\bf GC^{1}Gp}$.

\begin{proposition}
For a generalized cat$^1$-group $(G,s,t)$ the triple $(\ker s,\im s, \overline{t})$ is a generalized crossed module where $\overline{t}=t|_{\ker s}$.
\end{proposition}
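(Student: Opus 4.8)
The plan is to build every ingredient of a generalized crossed module out of $(G,s,t)$ and then verify the two axioms of Definition \ref{def:genxmod}. I would begin with a preliminary observation that both $s$ and $t$ are idempotent: grouping $sts$ in two ways gives $sts=s(ts)=s^{2}$ and $sts=(st)s=ts=s$, so $s^{2}=s$, and symmetrically $t^{2}=t$. This, together with \ref{defcond:gencat1gp1}, is what makes the source/target picture behave.

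Next I would check that $\ker s$ and $\im s$ are objects of \Gwa{}. Both are subgroups of $G$; since $s$ is a morphism in \Gwa{} and ${}^{g}{0}=0$ (as already used for the $\ker\alpha$ lemma above), for $x,x_{1}\in\ker s$ we get $s({}^{x}{x_{1}})={}^{s(x)}{s(x_{1})}={}^{0}{0}=0$, so $\ker s$ is closed under the self-action, and for $s(g_{1}),s(g_{2})\in\im s$ we have ${}^{s(g_{1})}{s(g_{2})}=s({}^{g_{1}}{g_{2}})\in\im s$, so $\im s$ is closed as well. Using $st=t$, each $x\in\ker s$ satisfies $t(x)=s(t(x))\in\im s$, so $\overline{t}=t|_{\ker s}$ is a genuine group homomorphism $\ker s\to\im s$. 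Finally I would equip $\im s$ with an action on $\ker s$ by restricting the self-action of $G$, $b\cdot a:={}^{b}{a}$; this lands in $\ker s$ because $s({}^{b}{a})={}^{s(b)}{s(a)}={}^{b}{0}=0$ (here $s(b)=b$ since $b\in\im s$ and $s^{2}=s$), and it is a group action, being the restriction of one to a subgroup acting on an invariant subset.

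With these data in hand, axiom \ref{defcond:genxmod1} is immediate: for $b\in\im s$ and $a\in\ker s$, using that $t$ preserves the self-action and that $t$ fixes $\im s$ pointwise (because $ts=s$), we get $\overline{t}(b\cdot a)=t({}^{b}{a})={}^{t(b)}{t(a)}={}^{b}{\overline{t}(a)}$.

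The heart of the argument is axiom \ref{defcond:genxmod2}, which unwinds to the identity ${}^{t(a)}{a_{1}}={}^{a}{a_{1}}$ for all $a,a_{1}\in\ker s$; this is where axiom \ref{defcond:gencat1gp2} must enter. The trick I would use is to feed the right element into \ref{defcond:gencat1gp2}: the element $y:=-a+t(a)$ lies in $\ker t$, since $t(y)=-t(a)+t^{2}(a)=0$ by the idempotency of $t$, while $a_{1}\in\ker s$; hence ${}^{-a+t(a)}{a_{1}}=a_{1}$. Rewriting the left side as ${}^{-a}\!\left({}^{t(a)}{a_{1}}\right)$ and then acting by $a$ on both sides collapses the $-a$ and yields ${}^{t(a)}{a_{1}}={}^{a}{a_{1}}$, which is exactly \ref{defcond:genxmod2}. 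I expect this last step to be the only real obstacle: one must discover that $t(a)-a$ (rather than $a$ or $t(a)$ alone) is the element of $\ker t$ whose compatibility relation converts the cat$^{1}$-group axiom into the desired equality of self-actions.
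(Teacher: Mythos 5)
Your proof is correct and takes essentially the same route as the paper's: you verify axiom (i) using $ts=s$ (so that $t$ fixes $\im s$ pointwise) and axiom (ii) by feeding the element $-a+t(a)\in\ker t$ into the compatibility condition of the generalized cat$^1$-group and splitting $t(a)=a+(-a+t(a))$, which is precisely the paper's computation. The extra details you supply (idempotency of $s$ and $t$, closure of $\ker s$ and $\im s$ under the self-action, and that $\overline{t}$ lands in $\im s$ via $st=t$) are steps the paper declares clear, and they all check out.
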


\begin{proof}
Here the action of $\im s$ on $\ker s$ is given by $x\cdot g={}^{x}{g}$ for all $g\in\ker s$ and $x\in\im s$. It is clear that $\ker s$ and $\im s$ are objects of \Gwa{} with the induced action from that of $G$ since $s$ is a morphism in \Gwa{}, and that $\overline{t}$ is a group homomorphism. So we only need to show that conditions \ref{defcond:genxmod1} and \ref{defcond:genxmod2} of Definition \ref{def:genxmod} are satisfied.  
\begin{enumerate}[label=(\roman{*}), leftmargin=1cm]
	\item Let $g\in\ker s$ and $x\in\im s$. Then there exist an element $g_1\in G$ such that $s(g_1)=x$ and
	\begin{alignat*}{2}
		\overline{t}(x\cdot g)&=\overline{t}({}^{x}{g}) \\
		& = t({}^{x}{g}) \\
		& = {}^{t(x)}{t(g)} \\
		& = {}^{t(s(g_1))}{t(g)} \\
		& = {}^{s(g_1)}{t(g)} \\
		& = {}^{x}{t(g)}.
	\end{alignat*}
	\item Let $g,g'\in \ker s$. Then $-g+t(g)\in \ker t$ and by the condition \ref{defcond:gencat1gp2} of Definition \ref{def:gencat1gp} ${}^{-g+t(g)}{g'}=g'$. Hence
	\begin{alignat*}{2}
		{\overline{t}(g)}\cdot{g'}&={}^{\overline{t}(g)}{g'} \\
		& = {}^{t(g)}{g'} \\
		& = {}^{g+(-g+t(g))}{g'} \\
		& = {}^{g}\left({{}^{(-g+t(g))}{g'}}\right) \\
		& = {}^{g}{g'}.
	\end{alignat*}
\end{enumerate}
This completes the proof.
\end{proof}

\begin{remark}
Above construction defines a functor 
\begin{equation*}
\mathsf{\bf GC^{1}Gp}\longrightarrow\mathsf{\bf GXMod}
\end{equation*}
from the category of generalized cat$^1$-groups to that of generalized crossed modules.
\end{remark}

\section{Coverings of generalized crossed modules}

The notion of covering of a crossed module is introduced in \cite{Brown1994} by Brown and Mucuk. They proved that the category of covering groups of a given topological group $X$ and the category of covering crossed modules of the associated crossed module to $X$, as in Example \ref{ex:xmod} \ref{ex:xmodtop}, are naturally equivalent. In this section we introduce the notion of coverings of generalized crossed modules and explore their properties. %Furthermore we prove an analogous result given by Brown and Mucuk.

\begin{definition}
Let $(A,B,\alpha)$ and $(\wt{A},\wt{B},\wt{\alpha})$ be two generalized crossed modules. If there is a morphism $\langle f,g\rangle \colon (\wt{A},\wt{B},\wt{\alpha})\rightarrow (A,B,\alpha)$ of generalized crossed modules such that $f\colon \wt{A}\rightarrow A$ is an isomorphism then $(\wt{A},\wt{B},\wt{\alpha})$ is called a covering of $(A,B,\alpha)$ and $\langle f,g\rangle$ is called a covering morphism.
\end{definition}

\[\xymatrix{\wt{A} \ar[d]_{f}^{\cong} \ar[r]^-{\wt{\alpha}}  & \wt{B} \ar[d]^-{g}  \\ A \ar[r]_-{\alpha}  & B }\]

\begin{example}
Let $(A,B,\alpha)$ be a generalized crossed module. Then $\langle 1_A,1_B \rangle\colon (A,B,\alpha)\rightarrow (A,B,\alpha)$ is a covering morphism of generalized crossed modules.
\end{example}

Let $\langle \wt{f},\wt{g}\rangle \colon (\wt{A},\wt{B},\wt{\alpha})\rightarrow (A,B,\alpha)$ and $\langle f',g'\rangle \colon (A',B',\alpha')\rightarrow (A,B,\alpha)$ be two covering morphisms. If there is a morphism $\langle f,g\rangle \colon (\wt{A},\wt{B},\wt{\alpha})\rightarrow (A',B',\alpha')$ of generalized crossed modules such that $\langle f',g'\rangle\circ\langle f,g\rangle=\langle \wt{f},\wt{g}\rangle$ then $\langle f,g\rangle$ is called a morphism of coverings of $(A,B,\alpha)$.

\[\xymatrix@R=1.5cm@C=0.5cm{
	(\wt{A},\wt{B},\wt{\alpha}) \ar[rr]^{\langle f,g\rangle} \ar[dr]_{\langle \wt{f},\wt{g}\rangle} &  &  (A',B',\alpha') \ar[dl]^{\langle f',g'\rangle} \\
	& (A,B,\alpha) &  }
\]

Coverings of a generalized crossed module $(A,B,\alpha)$ and morphisms between them as defined above forms a category. This category is denoted by $\mathsf{\bf Cov_{GXMod}}(A,B,\alpha)$.

\begin{proposition}\label{prop:kernel}
Let  $\langle f,g\rangle \colon (\wt{A},\wt{B},\wt{\alpha})\rightarrow (A,B,\alpha)$ be a covering morphism of generalized crossed modules. Then $f(\ker \wt{\alpha}) \subseteq \ker \alpha$.
\end{proposition}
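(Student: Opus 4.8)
The plan is to reduce the claim to the commutativity of the defining square of a morphism of generalized crossed modules, so that the covering hypothesis (that $f$ is an isomorphism) turns out to be irrelevant for this particular inclusion. By definition of a morphism $\langle f,g\rangle\colon(\wt{A},\wt{B},\wt{\alpha})\rightarrow(A,B,\alpha)$, the relation $g\wt{\alpha}=\alpha f$ holds; this single identity is all that I expect to need.

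First I would fix an arbitrary $\tilde{x}\in\ker\wt{\alpha}$, so that $\wt{\alpha}(\tilde{x})=0$, and then evaluate $\alpha$ on its image $f(\tilde{x})$ by moving $f$ across the commuting square. Applying $\alpha f=g\wt{\alpha}$ gives $\alpha(f(\tilde{x}))=g(\wt{\alpha}(\tilde{x}))=g(0)$, and since $g$ is a group homomorphism it preserves the identity, whence $g(0)=0$. Therefore $\alpha(f(\tilde{x}))=0$, that is $f(\tilde{x})\in\ker\alpha$, and as $\tilde{x}$ was arbitrary this yields $f(\ker\wt{\alpha})\subseteq\ker\alpha$.

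I do not anticipate any real obstacle: the statement is a purely diagrammatic consequence of the morphism axiom combined with the fact that $g$ sends the identity to the identity, and none of the action axioms of generalized crossed modules enter. The only thing to be careful about is to argue element-wise on $f$-images, rather than trying to exploit any injectivity or surjectivity of $f$, which would be unnecessary here.
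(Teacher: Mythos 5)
Your proof is correct and coincides with the paper's own argument: both fix $\tilde{x}\in\ker\wt{\alpha}$ and compute $\alpha(f(\tilde{x}))=g(\wt{\alpha}(\tilde{x}))=g(0)=0$ using only the morphism identity $\alpha f=g\wt{\alpha}$. Your observation that the isomorphism hypothesis on $f$ plays no role here is accurate -- the paper's proof likewise never invokes it.
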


\begin{proof}
Let $\wt{a}\in\ker \wt{\alpha}$. Then $\wt{\alpha}(\wt{a})=0$. Then we get 
\begin{equation*}
\alpha(f(\wt{a})) = g(\wt{\alpha}(\wt{a})) = g(0) = 0
\end{equation*}
since $\langle f,g\rangle$ is a morphism of generalized crossed modules and hence $f(\wt{a})\in \ker \alpha$. This completes the proof.
\end{proof}

Let $(A,B,\alpha)$ be an aspherical generalized crossed module and let $\langle f,g\rangle \colon (\wt{A},\wt{B},\wt{\alpha})\rightarrow (A,B,\alpha)$ be a covering morphism of generalized crossed modules. Since $(A,B,\alpha)$ is aspherical then $\ker\alpha=0$. By Proposition \ref{prop:kernel} $f(\ker \wt{\alpha})=0$ and consequently $\ker \wt{\alpha}=0$, since $f$ is an isomorphism.

\begin{corollary}
Every covering of an aspherical generalized crossed module is also aspherical.
\end{corollary}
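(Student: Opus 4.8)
The plan is to read this off directly as a corollary of Proposition~\ref{prop:kernel}, since it is exactly the content of the paragraph immediately preceding the statement. I would begin by fixing a covering morphism $\langle f,g\rangle\colon(\wt{A},\wt{B},\wt{\alpha})\rightarrow(A,B,\alpha)$ onto an aspherical generalized crossed module $(A,B,\alpha)$, and unwind the definitions: asphericity of the base means precisely that $\alpha$ is injective, i.e. $\ker\alpha=0$.

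The key step is to invoke Proposition~\ref{prop:kernel}, which gives $f(\ker\wt{\alpha})\subseteq\ker\alpha$. Combining this with $\ker\alpha=0$ forces $f(\ker\wt{\alpha})=0$. At this point the remaining work is purely set-theoretic. By the definition of a covering morphism the component $f\colon\wt{A}\rightarrow A$ is an isomorphism, hence in particular injective, so $f(\ker\wt{\alpha})=0$ yields $\ker\wt{\alpha}=0$. Therefore $\wt{\alpha}$ is injective and $(\wt{A},\wt{B},\wt{\alpha})$ is aspherical, which is the claim.

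I do not anticipate any genuine obstacle here: the statement is a formal consequence of the containment established in Proposition~\ref{prop:kernel} together with the injectivity of $f$. The only point requiring a (trivial) argument is the passage from $f(\ker\wt{\alpha})=0$ to $\ker\wt{\alpha}=0$, and this uses nothing beyond injectivity of $f$; the full isomorphism hypothesis of a covering morphism is more than is needed. If one wished to be maximally self-contained one could spell out that for $\wt{a}\in\ker\wt{\alpha}$ we have $f(\wt{a})=0$, whence $\wt{a}=0$ by injectivity, but this is immediate and the corollary can simply be stated as following from Proposition~\ref{prop:kernel}.
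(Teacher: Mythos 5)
Your argument is correct and coincides with the paper's own reasoning, which appears in the paragraph immediately preceding the corollary: asphericity gives $\ker\alpha=0$, Proposition~\ref{prop:kernel} gives $f(\ker\wt{\alpha})\subseteq\ker\alpha=0$, and injectivity of the isomorphism $f$ yields $\ker\wt{\alpha}=0$. Your added observation that only injectivity of $f$ (not the full isomorphism hypothesis) is needed is a fair refinement, but the substance of the proof is identical.
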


\begin{example}
Since $(H,G,i)$ is an aspherical generalized crossed module (Example \ref{ex:subgrpgxmod}) then every covering of $(H,G,i)$ is also aspherical.
\end{example}

\begin{lemma}
Let $\langle f,g\rangle \colon (\wt{A},\wt{B},\wt{\alpha})\rightarrow (A,B,\alpha)$ be a covering morphism of generalized crossed modules, and let $h\colon \wt{B}\rightarrow\wt{C}$ and $k\colon B\rightarrow C$ be two isomorphisms in \Gwa{}. Then \[\langle f,\wt{g}\rangle \colon (\wt{A},\wt{C},h\wt{\alpha})\rightarrow (A,C,k\alpha)\] is also a covering morphism of generalized crossed modules where $\wt{g}=kgh^{-1}$.
\end{lemma}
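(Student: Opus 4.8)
The plan is to recognise the two target triples as the generalized crossed modules produced by Lemma \ref{lem:isogxmod1}, and then to verify directly that $\langle f,\wt{g}\rangle$ meets the two defining conditions of a morphism together with the covering requirement that its first component be an isomorphism.

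First I would invoke Lemma \ref{lem:isogxmod1} twice. Applied to $(\wt{A},\wt{B},\wt{\alpha})$ with the isomorphism $h\colon\wt{B}\to\wt{C}$ it yields the generalized crossed module $(\wt{A},\wt{C},h\wt{\alpha})$ whose action of $\wt{C}$ on $\wt{A}$ is $\tilde{c}\cdot\tilde{a}=h^{-1}(\tilde{c})\cdot\tilde{a}$; applied to $(A,B,\alpha)$ with $k\colon B\to C$ it yields $(A,C,k\alpha)$ whose action of $C$ on $A$ is $c\cdot a=k^{-1}(c)\cdot a$. This fixes the precise meaning of the source and target objects and, crucially, pins down the two redefined actions that all subsequent computations must respect.

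Next I would check the square-commutativity condition $\wt{g}\,(h\wt{\alpha})=(k\alpha)\,f$. Substituting $\wt{g}=kgh^{-1}$ collapses the composite to $kgh^{-1}h\wt{\alpha}=kg\wt{\alpha}$, and the original covering identity $g\wt{\alpha}=\alpha f$ finishes it as $k\alpha f$. Then I would verify the action-compatibility $f(\tilde{c}\cdot\tilde{a})=\wt{g}(\tilde{c})\cdot f(\tilde{a})$ for the new actions: unfolding the source action to $f\!\left(h^{-1}(\tilde{c})\cdot\tilde{a}\right)$ and applying the original morphism equation $f(\tilde{b}\cdot\tilde{a})=g(\tilde{b})\cdot f(\tilde{a})$ with $\tilde{b}=h^{-1}(\tilde{c})$ gives $g\!\left(h^{-1}(\tilde{c})\right)\cdot f(\tilde{a})$; unfolding the target action via $c\cdot a=k^{-1}(c)\cdot a$ gives $k^{-1}\!\left(kgh^{-1}(\tilde{c})\right)\cdot f(\tilde{a})$, and the cancellation $k^{-1}k=1_{B}$ matches the two sides.

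Finally, since the first component is the unchanged isomorphism $f\colon\wt{A}\to A$, the pair $\langle f,\wt{g}\rangle$ is a covering morphism. No serious obstacle is expected; the only care needed is bookkeeping, namely keeping the two redefined actions straight and ensuring that the inverses $h^{-1}$ and $k^{-1}$ cancel exactly against $h$ and $k$ at the right places. (Note that $\wt{g}=kgh^{-1}$ is a composite of group homomorphisms and hence a group homomorphism, which is all the definition of a morphism of generalized crossed modules demands.)
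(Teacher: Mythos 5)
Your proof is correct and follows the same route as the paper, whose entire argument is the computation $\wt{g}(h\wt{\alpha})=kgh^{-1}h\wt{\alpha}=kg\wt{\alpha}=k\alpha f$, with Lemma \ref{lem:isogxmod1} implicitly supplying the two target objects. You are in fact more thorough than the paper, which declares the square ``sufficient'' and omits the action-compatibility check $f(\tilde{c}\cdot\tilde{a})=\wt{g}(\tilde{c})\cdot f(\tilde{a})$ that you verify correctly via $k^{-1}k=1_{B}$ --- a condition genuinely required by the definition of a morphism of generalized crossed modules, so your extra step fills a small gap the paper glosses over.
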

\begin{proof}
It is sufficient to prove the commutativity of the following diagram:
\[\xymatrix{\wt{A} \ar[d]_{f}^{\cong} \ar[r]^-{h\wt{\alpha}}  & \wt{C} \ar[d]^-{\wt{g}}  \\ A \ar[r]_-{k\alpha}  & C. }\]
Then we have
\begin{equation*}
\wt{g}(h\wt{\alpha})=kgh^{-1}(h\wt{\alpha})=kg\wt{\alpha}=k\alpha f
\end{equation*}
which completes the proof.
\end{proof}

\begin{proposition}
Let $\langle f,g\rangle \colon (\wt{A},\wt{B},\wt{\alpha})\rightarrow (A,B,\alpha)$ and $\langle \wt{f},\wt{g}\rangle \colon (A',B',\alpha')\rightarrow (\wt{A},\wt{B},\wt{\alpha})$ be two covering morphism of generalized crossed modules. Then so is $\langle f',g'\rangle \colon (A',B',\alpha')\rightarrow (A,B,\alpha)$ where $f'=f\mbox{}\wt{f}$ and $g'=g\wt{g}$.
\end{proposition}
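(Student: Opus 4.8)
The plan is to verify directly that $\langle f',g'\rangle$ meets the three requirements implicit in being a covering morphism: that $f'$ and $g'$ are group homomorphisms, that the pair $\langle f',g'\rangle$ is a morphism of generalized crossed modules (commuting square together with action-compatibility), and that $f'$ is an isomorphism. Since both hypotheses hand us morphisms of generalized crossed modules, essentially everything will follow by composing the data already available.

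First I would dispose of the structural conditions. As $f'=f\wt{f}$ and $g'=g\wt{g}$ are composites of group homomorphisms, they are themselves group homomorphisms. Moreover $f$ is an isomorphism because $\langle f,g\rangle$ is a covering morphism, and $\wt{f}$ is an isomorphism because $\langle \wt{f},\wt{g}\rangle$ is a covering morphism; hence $f'=f\wt{f}$, being a composite of two isomorphisms, is again an isomorphism. This secures the distinguishing property of a covering morphism, once we know $\langle f',g'\rangle$ is a morphism of generalized crossed modules at all.

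Next I would check the commutativity of the defining square, $g'\wt{\alpha}'=\alpha f'$, where by $\wt{\alpha}'$ I mean the structure map $\alpha'$ of $(A',B',\alpha')$. This follows by chaining the two squares supplied by the hypotheses: from $\langle \wt{f},\wt{g}\rangle$ we have $\wt{g}\alpha'=\wt{\alpha}\wt{f}$, and from $\langle f,g\rangle$ we have $g\wt{\alpha}=\alpha f$. Then one computes $g'\alpha'=g\wt{g}\alpha'=g\wt{\alpha}\wt{f}=\alpha f\wt{f}=\alpha f'$, as required.

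Finally I would verify the action-compatibility $f'(b'\cdot a')=g'(b')\cdot f'(a')$ for all $a'\in A'$ and $b'\in B'$. This again reduces to applying the two given compatibilities in succession: apply $\wt{f}(b'\cdot a')=\wt{g}(b')\cdot\wt{f}(a')$ first, and then apply the compatibility of $\langle f,g\rangle$ to the resulting element, obtaining $f\big(\wt{g}(b')\cdot\wt{f}(a')\big)=g(\wt{g}(b'))\cdot f(\wt{f}(a'))=g'(b')\cdot f'(a')$. I expect no genuine obstacle in this proof; it is a bookkeeping exercise in composing the two hypothesized morphisms, and the only point deserving explicit mention is that $f'$ inherits the isomorphism property from $f$ and $\wt{f}$, which is exactly what promotes the composite morphism to a covering morphism.
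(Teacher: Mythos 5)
Your proposal is correct and is precisely the routine verification the paper has in mind: the paper states ``The proof is easy so is omitted,'' and your argument supplies exactly the omitted bookkeeping, namely that $g'\alpha'=g\wt{g}\alpha'=g\wt{\alpha}\wt{f}=\alpha f\wt{f}=\alpha f'$, that $f'(b'\cdot a')=g'(b')\cdot f'(a')$ by applying the two compatibilities in succession, and that $f'=f\wt{f}$ is an isomorphism as a composite of isomorphisms. Nothing is missing; the only cosmetic point is your temporary notation $\wt{\alpha}'$ for $\alpha'$, which you already clarify in the text.
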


\[\xymatrix{A' \ar[d]_{\wt{f}}^{\cong}  \ar@<-1pt> `l[d] `[dd]_{f'} [dd] \ar[r]^-{\alpha'}  & B' \ar[d]^-{\wt{g}} \ar@<-1pt> `r[d] `[dd]^{g'} [dd]  \\ \wt{A} \ar[d]_{f}^{\cong} \ar[r]^-{\wt{\alpha}}  & \wt{B} \ar[d]^-{g}  \\ A \ar[r]_-{\alpha}  & B }\]

\begin{proof}
The proof is easy so is omitted.
\end{proof}

\begin{lemma}
Let $\langle \wt{f},\wt{g}\rangle \colon (\wt{A},\wt{B},\wt{\alpha})\rightarrow (A,B,\alpha)$ and $\langle f',g'\rangle \colon (A',B',\alpha')\rightarrow (A,B,\alpha)$ be two covering morphisms, and let $\langle f,g\rangle \colon (\wt{A},\wt{B},\wt{\alpha})\rightarrow (A',B',\alpha')$ be a morphism between these two coverings of $(A,B,\alpha)$. Then $\langle f,g\rangle$ becomes a covering morphism of generalized crossed modules.
\end{lemma}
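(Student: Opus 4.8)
The plan is to read off what the hypothesis that $\langle f,g\rangle$ is a morphism of coverings actually says at the level of the underlying maps, and then to use a two-out-of-three argument for isomorphisms. Since morphisms of generalized crossed modules compose componentwise, the defining condition $\langle f',g'\rangle\circ\langle f,g\rangle=\langle\wt{f},\wt{g}\rangle$ unpacks into the two equations $f'f=\wt{f}$ and $g'g=\wt{g}$. To conclude that $\langle f,g\rangle$ is a covering morphism, the only thing I must verify is that $f\colon\wt{A}\to A'$ is an isomorphism; the definition of covering morphism places no condition on the second component $g$, so the equation $g'g=\wt{g}$ will not be needed.

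First I would record that $f'\colon A'\to A$ is an isomorphism, because $\langle f',g'\rangle$ is a covering morphism, and likewise that $\wt{f}\colon\wt{A}\to A$ is an isomorphism, because $\langle\wt{f},\wt{g}\rangle$ is a covering morphism. These are exactly the properties guaranteed by the definition of covering morphism applied to the two given coverings of $(A,B,\alpha)$.

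Next I would solve the relation $f'f=\wt{f}$ for $f$. Since $f'$ is invertible, I can write
\[
f=(f')^{-1}\,\wt{f}.
\]
The right-hand side is a composite of two isomorphisms in \Gwa{}, namely $(f')^{-1}$ and $\wt{f}$, hence $f$ is itself an isomorphism. Therefore $\langle f,g\rangle$ satisfies the defining condition of a covering morphism, and the proof is complete.

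I do not expect any genuine obstacle here: the argument is purely the cancellation of an isomorphism. The only point that deserves a moment's care is the claim that composition of morphisms of generalized crossed modules is componentwise, so that the categorical identity $\langle f',g'\rangle\circ\langle f,g\rangle=\langle\wt{f},\wt{g}\rangle$ really does yield $f'f=\wt{f}$ on the first component; this follows directly from the way morphisms of generalized crossed modules were defined. One could also phrase the final step without inverting $f'$, by checking injectivity and surjectivity of $f$ separately from $f'f=\wt{f}$ with both $f',\wt{f}$ bijective, but the inversion formula is the cleanest route.
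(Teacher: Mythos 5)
Your proof is correct and follows essentially the same route as the paper: both extract $f'f=\wt{f}$ from the defining equation of a covering morphism and conclude via $f=(f')^{-1}\wt{f}$ that $f$ is a composite of isomorphisms. Your version merely spells out the componentwise unpacking that the paper leaves implicit.
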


\begin{proof}
It is easy to see that $f$ is an isomorphism since $f'$ and $\wt{f}$ are isomorphisms and $f=(f')^{-1}\wt{f}$. This completes the proof.
\end{proof}

\begin{theorem}
Let $(C,D,\gamma)$ be a simply connected generalized crossed module, let \[\langle f,g\rangle\colon (C,D,\gamma)\rightarrow(A,B,\alpha)\] be a morphism of generalized crossed modules and let $\langle \wt{f},\wt{g}\rangle\colon (\widetilde{A},\widetilde{B},\widetilde{\alpha})\rightarrow(A,B,\alpha)$ be a covering morphism. Then there exist a generalized crossed module morphism \[\langle f',g'\rangle\colon (C,D,\gamma)\rightarrow(\widetilde{A},\widetilde{B},\widetilde{\alpha})\] such that $\langle \wt{f},\wt{g}\rangle\langle f',g'\rangle=\langle f,g\rangle$ if and only if $f(\ker\gamma)\subseteq \wt{f}(\ker\widetilde{\alpha})$.
\end{theorem}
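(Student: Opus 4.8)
The plan is to prove the two implications separately, with the construction of the lift being the substantive part. In both directions the decisive observation is that, since $\wt{f}$ is an isomorphism, the only possible first component of a lift is $f'=\wt{f}^{-1}f$; this already forces $\wt{f}f'=f$, so the whole problem reduces to producing a compatible second component $g'\colon D\rightarrow\wt{B}$.

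For the necessity direction I would assume a lift $\langle f',g'\rangle$ exists, so that $\wt{f}f'=f$ and $\wt{g}g'=g$, and take $c\in\ker\gamma$. Since $\langle f',g'\rangle$ is a morphism of generalized crossed modules, $\wt{\alpha}(f'(c))=g'(\gamma(c))=g'(0)=0$, so $f'(c)\in\ker\wt{\alpha}$; applying $\wt{f}$ gives $f(c)=\wt{f}(f'(c))\in\wt{f}(\ker\wt{\alpha})$. Hence $f(\ker\gamma)\subseteq\wt{f}(\ker\wt{\alpha})$, and this direction is immediate.

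For the sufficiency direction I would set $f'=\wt{f}^{-1}f$, which is a morphism in \Gwa{} as a composite of such, and then \emph{define} $g'$ by $g'(\gamma(c))=\wt{\alpha}(f'(c))$ for $c\in C$. This prescription reaches all of $D$ precisely because $(C,D,\gamma)$ is simply connected, that is, $\gamma$ is surjective. The main obstacle is well-definedness: if $\gamma(c_1)=\gamma(c_2)$ then $c_1-c_2\in\ker\gamma$, so by hypothesis $f(c_1-c_2)\in\wt{f}(\ker\wt{\alpha})$, whence $f'(c_1-c_2)=\wt{f}^{-1}(f(c_1-c_2))\in\ker\wt{\alpha}$ and therefore $\wt{\alpha}(f'(c_1))=\wt{\alpha}(f'(c_2))$. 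This is the only place the inclusion hypothesis is used, and it is the heart of the argument.

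The remaining verifications I expect to be routine, relying on the surjectivity of $\gamma$ together with the fact that $\gamma$, $f'$ and $\wt{\alpha}$ are all morphisms in \Gwa{}. Writing a typical element of $D$ as $\gamma(c)$, additivity of $g'$ follows from that of $\wt{\alpha}f'$, and preservation of the self-action follows from $g'({}^{\gamma(c)}{\gamma(c_1)})=g'(\gamma({}^{c}{c_1}))=\wt{\alpha}(f'({}^{c}{c_1}))={}^{\wt{\alpha}(f'(c))}{\wt{\alpha}(f'(c_1))}={}^{g'(\gamma(c))}{g'(\gamma(c_1))}$. The identity $\wt{g}g'=g$ is checked on $\gamma(c)$ using that $\langle\wt{f},\wt{g}\rangle$ and $\langle f,g\rangle$ are morphisms: $\wt{g}(g'(\gamma(c)))=\wt{g}(\wt{\alpha}(f'(c)))=\alpha(\wt{f}(f'(c)))=\alpha(f(c))=g(\gamma(c))$. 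Finally, $\langle f',g'\rangle$ is a morphism of generalized crossed modules: $g'\gamma=\wt{\alpha}f'$ holds by the definition of $g'$, while for the equivariance $f'(d\cdot c)=g'(d)\cdot f'(c)$ I would write $d=\gamma(c_0)$ and apply condition \ref{defcond:genxmod2} of Definition \ref{def:genxmod} on each side, so that the left side becomes $f'({}^{c_0}{c})={}^{f'(c_0)}{f'(c)}$ and the right side becomes $\wt{\alpha}(f'(c_0))\cdot f'(c)={}^{f'(c_0)}{f'(c)}$, and the two agree.
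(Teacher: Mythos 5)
Your proof is correct and takes essentially the same route as the paper: the paper likewise sets $f'=\wt{f}^{-1}f$, defines $g'(d)=\wt{\alpha}f'(c)$ for any $c$ with $\gamma(c)=d$ (using simple connectedness for surjectivity and $f(\ker\gamma)\subseteq\wt{f}(\ker\wt{\alpha})$ for well-definedness), and proves necessity by the same kernel computation. The only difference is that you carry out the verifications (well-definedness, additivity, preservation of the self-action, $\wt{g}g'=g$, and equivariance via condition (ii) of the definition) that the paper declares straightforward and omits, and all of these checks are accurate.
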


\begin{proof}
Let assume that $f(\ker\gamma)\subseteq \wt{f}(\ker\widetilde{\alpha})$. We take $f'={\wt{f}}^{-1}f$ and define $g'\colon D\rightarrow \wt{B}$ as $g'(d)=\wt{\alpha}f'(c)$ for any $c\in C$ satisfying $\gamma(c)=d$. Proving that $\langle f',g'\rangle$ is a generalized crossed module morphism is straightforward from assumptions.

On the other side, let $c\in\ker\gamma$. Then we get
\begin{equation*}
0=g'(0)=\wt{\alpha}(f'(c))=\wt{\alpha}({\wt{f}}^{-1}(f(c)))
\end{equation*}
by the definition of $g'$ and hence ${\wt{f}}^{-1}(f(c))\in\ker\wt{\alpha}$ which implies $f(c)\in\wt{f}(\ker\wt{\alpha})$ and completes the proof.
\end{proof}

\begin{corollary}
Let $\langle \wt{f},\wt{g}\rangle\colon (\widetilde{A},\widetilde{B},\widetilde{\alpha})\rightarrow(A,B,\alpha)$ and $\langle f',g'\rangle\colon (A',B',\alpha')\rightarrow(A,B,\alpha)$ be two covering morphisms such that $(A',B',\alpha')$ is simply connected. Then $(A',B',\alpha')$ is also a covering of $(\widetilde{A},\widetilde{B},\widetilde{\alpha})$ if and only if $f(\ker\alpha')\subseteq \wt{f}(\ker\widetilde{\alpha})$.
\end{corollary}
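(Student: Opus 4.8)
The plan is to deduce this corollary directly from the preceding Theorem by specializing it to $(C,D,\gamma)=(A',B',\alpha')$ and $\langle f,g\rangle=\langle f',g'\rangle$. This specialization is exactly what the hypotheses permit: $(A',B',\alpha')$ is assumed simply connected, which is precisely the condition the Theorem demands of its source, and $\langle f',g'\rangle$ is a morphism of generalized crossed modules into $(A,B,\alpha)$, as required. Under this identification the Theorem's kernel condition $f(\ker\gamma)\subseteq\wt{f}(\ker\wt{\alpha})$ becomes the inclusion $f'(\ker\alpha')\subseteq\wt{f}(\ker\wt{\alpha})$ appearing in the statement, so the whole corollary reduces to matching the Theorem's conclusion against the phrase ``is a covering of''.

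For the sufficiency direction I would assume $f'(\ker\alpha')\subseteq\wt{f}(\ker\wt{\alpha})$ and apply the Theorem to obtain a generalized crossed module morphism $\langle p,q\rangle\colon(A',B',\alpha')\to(\wt{A},\wt{B},\wt{\alpha})$ with $\langle\wt{f},\wt{g}\rangle\langle p,q\rangle=\langle f',g'\rangle$. This compatibility is exactly the defining condition for $\langle p,q\rangle$ to be a morphism of coverings of $(A,B,\alpha)$. The single additional point---and the only place the covering hypotheses on $\langle\wt{f},\wt{g}\rangle$ and $\langle f',g'\rangle$ are actually used---is that $\langle p,q\rangle$ is in fact a covering morphism, i.e. that $p$ is an isomorphism. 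From the construction in the Theorem $p=\wt{f}^{-1}f'$, a composite of the isomorphisms $\wt{f}^{-1}$ and $f'$, so $p$ is an isomorphism; alternatively this is immediate from the earlier Lemma asserting that every morphism between two coverings of $(A,B,\alpha)$ is a covering morphism. Hence $(A',B',\alpha')$ is a covering of $(\wt{A},\wt{B},\wt{\alpha})$.

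For the necessity direction I would read ``$(A',B',\alpha')$ is a covering of $(\wt{A},\wt{B},\wt{\alpha})$'' in the category-of-coverings sense, namely that there is a covering morphism $\langle p,q\rangle\colon(A',B',\alpha')\to(\wt{A},\wt{B},\wt{\alpha})$ satisfying $\langle\wt{f},\wt{g}\rangle\langle p,q\rangle=\langle f',g'\rangle$. Feeding this lift into the ``only if'' half of the Theorem gives the inclusion at once: for $c\in\ker\alpha'$ one has $0=q(0)=\wt{\alpha}(p(c))$, so $p(c)\in\ker\wt{\alpha}$ and therefore $f'(c)=\wt{f}(p(c))\in\wt{f}(\ker\wt{\alpha})$, whence $f'(\ker\alpha')\subseteq\wt{f}(\ker\wt{\alpha})$.

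The only genuine subtlety is conceptual rather than computational: one must interpret ``is a covering of'' as the existence of a morphism of coverings, so that the witnessing morphism commutes with the structure maps down to $(A,B,\alpha)$; with any looser reading the equivalence would not be the correct specialization of the Theorem. Granting that reading, the corollary is the Theorem's equivalence together with the single observation that the produced lift is automatically a covering morphism because its $A$-component is a composite of isomorphisms, and no new calculation is required.
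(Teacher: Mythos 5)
Your proof is correct and is exactly the intended derivation: the paper offers no separate argument for this corollary, which is the preceding Theorem specialized to $(C,D,\gamma)=(A',B',\alpha')$, combined with the earlier Lemma that any morphism between two coverings of $(A,B,\alpha)$ is automatically a covering morphism. Your two clarifications are also well taken --- the statement's $f(\ker\alpha')$ should read $f'(\ker\alpha')$, and ``is a covering of'' must indeed be read as the existence of a morphism in $\mathsf{\bf Cov_{GXMod}}(A,B,\alpha)$ (i.e.\ compatible with the projections to $(A,B,\alpha)$), since otherwise the necessity direction would not follow.
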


\begin{corollary}
Let $\langle \wt{f},\wt{g}\rangle\colon (\widetilde{A},\widetilde{B},\widetilde{\alpha})\rightarrow(A,B,\alpha)$ and $\langle f',g'\rangle\colon (A',B',\alpha')\rightarrow(A,B,\alpha)$ be two covering morphisms such that $(\widetilde{A},\widetilde{B},\widetilde{\alpha})$ and $(A',B',\alpha')$ are both simply connected. Then there is an isomorphism $\langle f,g\rangle\colon(A',B',\alpha')\rightarrow(\widetilde{A},\widetilde{B},\widetilde{\alpha})$ if and only if $f(\ker\alpha')= \wt{f}(\ker\widetilde{\alpha})$.
\end{corollary}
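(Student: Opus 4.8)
The plan is to prove the two implications separately, using the preceding theorem applied in both directions together with the surjectivity of $\alpha'$ and $\widetilde{\alpha}$ coming from simple connectivity. Throughout I read the displayed condition as $f'(\ker\alpha')=\wt{f}(\ker\widetilde{\alpha})$, since only then are both sides subsets of $A$ and only then does the condition avoid mentioning the isomorphism whose existence is being characterised; I also read \emph{isomorphism} as an isomorphism in $\mathsf{\bf Cov_{GXMod}}(A,B,\alpha)$, so that $\langle f,g\rangle$ satisfies the triangle identities $\wt{f}f=f'$ and $\wt{g}g=g'$ and has both components bijective.

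For the forward implication I would start from such an isomorphism $\langle f,g\rangle\colon(A',B',\alpha')\to(\widetilde{A},\widetilde{B},\widetilde{\alpha})$ and first prove the auxiliary equality $f(\ker\alpha')=\ker\widetilde{\alpha}$. The left-to-right inclusion is immediate from $\widetilde{\alpha}f=g\alpha'$, exactly as in Proposition \ref{prop:kernel}; the reverse inclusion uses the same relation once more together with the injectivity of $g$, since $\widetilde{\alpha}(\tilde a)=0$ forces $g(\alpha'(f^{-1}\tilde a))=0$ and hence $f^{-1}\tilde a\in\ker\alpha'$. Applying $\wt{f}$ to both sides and invoking $\wt{f}f=f'$ then converts this into the desired equality $f'(\ker\alpha')=\wt{f}(\ker\widetilde{\alpha})$.

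For the converse I would feed the two inclusions contained in the hypothesis into the preceding theorem separately. From $f'(\ker\alpha')\subseteq\wt{f}(\ker\widetilde{\alpha})$ and the simple connectivity of $(A',B',\alpha')$ the theorem yields a morphism of coverings $\langle f,g\rangle\colon(A',B',\alpha')\to(\widetilde{A},\widetilde{B},\widetilde{\alpha})$ with $f=\wt{f}^{-1}f'$; symmetrically, from $\wt{f}(\ker\widetilde{\alpha})\subseteq f'(\ker\alpha')$ and the simple connectivity of $(\widetilde{A},\widetilde{B},\widetilde{\alpha})$ it yields a morphism of coverings $\langle f_1,g_1\rangle\colon(\widetilde{A},\widetilde{B},\widetilde{\alpha})\to(A',B',\alpha')$ with $f_1=(f')^{-1}\wt{f}$. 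The $A$-components are then manifestly mutually inverse.

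The step I expect to be the main obstacle is checking that the $B$-components $g$ and $g_1$ are also mutually inverse, which is what upgrades $\langle f,g\rangle$ from a covering morphism to an isomorphism of coverings. Rather than unwinding the preimage definition of $g$ supplied by the theorem, I would exploit surjectivity: the composite $\langle f_1 f,\,g_1 g\rangle$ is an endomorphism of the generalized crossed module $(A',B',\alpha')$ with $f_1 f=1_{A'}$, so the morphism identity $(g_1 g)\alpha'=\alpha'(f_1 f)=\alpha'=1_{B'}\alpha'$ together with the surjectivity of $\alpha'$ forces $g_1 g=1_{B'}$; the mirror-image computation, using the surjectivity of $\widetilde{\alpha}$, gives $g g_1=1_{\widetilde{B}}$. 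This is precisely where both simple-connectivity hypotheses are indispensable, and it is the only place where the argument needs more than routine bookkeeping. Concluding that $\langle f,g\rangle$ is an isomorphism in $\mathsf{\bf Cov_{GXMod}}(A,B,\alpha)$ then completes the proof.
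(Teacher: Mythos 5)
Your proof is correct and follows exactly the route the paper intends: the corollary is stated without proof as a consequence of the preceding theorem applied in both directions, which is what you do, and your surjectivity argument forcing $g_1g=1_{B'}$ and $gg_1=1_{\widetilde{B}}$ correctly supplies the one detail the paper leaves implicit. Your reading of the displayed condition as $f'(\ker\alpha')=\widetilde{f}(\ker\widetilde{\alpha})$ repairs what is indeed a typo in the statement (as written, $f$ would be the very isomorphism being characterised), and your insistence that the isomorphism be one of coverings, i.e.\ satisfy $\widetilde{f}f=f'$ and $\widetilde{g}g=g'$, is exactly the reading needed for the forward implication to go through.
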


\subsection{An application to topological groups with action}

\begin{definition}
Let $X$ be a topological group. If there is a continuous group action of $X$ on itself then we say that $X$ is a \textbf{self-acting topological group}.
\end{definition}

We will denote the set of all homotopy classes of the paths in $X$ by $\pi(X)$ and the set of all homotopy classes of the paths with the initial point $x_0$ in $X$ by $\mathcal{P}(X,x_0)$. That is, $\mathcal{P}(X,x_0)=\left\{ [\alpha] \mid \alpha(0)=x_0\right\}$.

\begin{proposition}
Let $X$ be a self-acting topological group. Then $\pi(X)$ is a group with action with the induced action from that of $X$, i.e. 
\[\begin{array}{rcccl}
\phi & : & \pi(X)\times \pi(X)        & \longrightarrow & \pi(X)\\
     &   & \big([\alpha],[\beta]\big) & \longmapsto     & {~}^{[\alpha]}{[\beta]}=\left[ {}^{\alpha}{\beta} \right]
\end{array}\]
where $({}^{\alpha}{\beta})(t)={}^{\alpha(t)}{\beta(t)}$ for $t\in[0,1]$.
\end{proposition}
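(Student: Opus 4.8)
The plan is to transport the \Gwa{}\ structure of $X$ to $\pi(X)$ entirely pointwise, so that each defining identity for $\pi(X)$ reduces, upon evaluation at a parameter $t\in[0,1]$, to the corresponding identity in $X$. The only genuinely topological input is that these pointwise constructions respect homotopy, and this is where continuity of the self-action of $X$ is used; everything else is a transcription of the group-with-action axioms of $X$.

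First I would fix the group structure on $\pi(X)$. For paths $\alpha,\beta\colon[0,1]\to X$ put $(\alpha+\beta)(t)=\alpha(t)+\beta(t)$, using the (continuous) multiplication of $X$; this is again a path, and if $H$ is a homotopy from $\alpha$ to $\alpha'$ and $K$ a homotopy from $\beta$ to $\beta'$, then $(s,t)\mapsto H(s,t)+K(s,t)$ is a homotopy from $\alpha+\beta$ to $\alpha'+\beta'$, so $+$ descends to $\pi(X)$. Associativity, the class of the constant path at $0\in X$ as unit, and $t\mapsto -\alpha(t)$ as inverse all follow by evaluating at each $t$ and invoking the group axioms of $X$. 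Hence $\pi(X)$ is a group.

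Next I would verify that $\phi$ is well defined, which I expect to be the one substantive (though mild) step. Given $[\alpha]=[\alpha']$ and $[\beta]=[\beta']$, choose homotopies $H\colon\alpha\simeq\alpha'$ and $K\colon\beta\simeq\beta'$ and define $L\colon[0,1]\times[0,1]\to X$ by $L(s,t)={}^{H(s,t)}{K(s,t)}$. Since the self-action $X\times X\to X$ is continuous and $(s,t)\mapsto\big(H(s,t),K(s,t)\big)$ is continuous, $L$ is continuous; matching of the initial values (and, if one works rel endpoints, the terminal values) of $\alpha,\alpha'$ and of $\beta,\beta'$ guarantees the correct boundary behaviour of $L$. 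Thus $L$ is a homotopy ${}^{\alpha}{\beta}\simeq{}^{\alpha'}{\beta'}$, giving $[{}^{\alpha}{\beta}]=[{}^{\alpha'}{\beta'}]$, so $\phi$ does not depend on the chosen representatives.

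Finally I would check the axioms of a left self-action. Letting $e$ denote the constant path at $0\in X$, one has $({}^{e}{\beta})(t)={}^{0}{\beta(t)}=\beta(t)$ because $0$ acts trivially in $X$, whence ${}^{[e]}{[\beta]}=[\beta]$ and $[e]$ is the unit of $\pi(X)$. For compatibility, evaluating at $t$ gives ${}^{(\alpha_1+\alpha_2)(t)}{\beta(t)}={}^{\alpha_1(t)+\alpha_2(t)}{\beta(t)}={}^{\alpha_1(t)}\big({}^{\alpha_2(t)}{\beta(t)}\big)$ by the action axiom of $X$, which is precisely the pointwise form of ${}^{[\alpha_1]+[\alpha_2]}{[\beta]}={}^{[\alpha_1]}\big({}^{[\alpha_2]}{[\beta]}\big)$; and should the convention for \Gwa{}\ additionally require each ${}^{[\alpha]}(-)$ to be an endomorphism of $\pi(X)$, this too follows by the same pointwise reduction from the corresponding property in $X$. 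Combining these steps shows that $\phi$ equips the group $\pi(X)$ with the structure of an object of \Gwa{}, as claimed.
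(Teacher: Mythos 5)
Your proof is correct and takes essentially the same approach as the paper: the substantive step in both is that $L(s,t)={}^{H(s,t)}{K(s,t)}$ --- which is exactly the paper's homotopy $\psi\circ(F,G)$ with $\psi$ the continuous self-action --- shows $\phi$ is well defined, after which the group and action axioms are reduced pointwise to those of $X$. Your write-up merely spells out the pointwise verifications that the paper dismisses as straightforward.
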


%\begin{proposition}
%Let $X$ be a self-acting topological group then $\mathcal{P}(X,0)$ is a self-acting group with the self action given by
%	\[ \begin{array}{rcccl}
%		\phi & : &	\mathcal{P}(X,0)\times \mathcal{P}(X,0) & \longrightarrow & \mathcal{P}(X,0)                 \\
%		& &	\big([\alpha],[\beta]\big) & \longmapsto     & {~}^{[\alpha]}{[\beta]}=\left[ {}^{\alpha}{\beta} \right] 
%	\end{array}\] 
%	where $({}^{\alpha}{\beta})(t)={}^{\alpha(t)}{\beta(t)}$ for $t\in[0,1]$.
%\end{proposition}

\begin{proof}
It is a well known fact that $\pi(X)$ is a group with the operation induced from that of $X$. So we only need to show that $\phi$ is a group action. Now we prove that $\phi$ is well-defined. Let denote the self-action of $X$ with $\psi$. Assume that $F:\alpha\simeq \alpha'$ and $G:\beta\simeq \beta'$. Then $H:=\psi\circ(F,G)$ is a homotopy from ${~}^{\alpha}{\beta}$ to ${~}^{\alpha'}{\beta'}$. Other details are straightforward from the definition of ${~}^{\alpha}{\beta}$ and the fact that $X$ being a self-acting topological group.
\end{proof}

\begin{proposition}
Let $X$ be a self-acting topological group with the identity element $e$. Then $(\mathcal{P}(X,e)$ is an ideal of $\pi(X)$. \end{proposition}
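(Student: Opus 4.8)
The plan is to verify the three defining conditions of an ideal (Definition~\ref{def:ideal}) directly, working from the structure established in the previous proposition: $\pi(X)$ is an object of \Gwa{} whose operation is induced pointwise from $X$, so $[\alpha]+[\beta]=[\alpha+\beta]$ with $(\alpha+\beta)(t)=\alpha(t)+\beta(t)$, and whose self-action is ${}^{[\alpha]}{[\beta]}=\left[{}^{\alpha}{\beta}\right]$ with $({}^{\alpha}{\beta})(t)={}^{\alpha(t)}{\beta(t)}$. Since membership in $\mathcal{P}(X,e)$ is precisely the condition that a representative sends the parameter $0$ to the identity $e$, every verification collapses to an evaluation at $t=0$. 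Before that I would note that $\mathcal{P}(X,e)$ is well defined on classes: path homotopies fix endpoints, so $\alpha(0)$ depends only on $[\alpha]$.

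First I would check that $\mathcal{P}(X,e)$ is a subgroup. If $\alpha(0)=\beta(0)=e$, then $(\alpha+\beta)(0)=\alpha(0)+\beta(0)=e$ and $(-\alpha)(0)=-\alpha(0)=e$, and the class of the constant path at $e$ is the neutral element of $\pi(X)$ and lies in $\mathcal{P}(X,e)$; hence $\mathcal{P}(X,e)$ is closed under the operation and inverses.

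Then I would dispatch the ideal conditions by evaluating at $t=0$, for arbitrary $[\alpha]\in\pi(X)$ and $[\beta]\in\mathcal{P}(X,e)$. For condition~\ref{cond:ideal2} we get $({}^{\alpha}{\beta})(0)={}^{\alpha(0)}{\beta(0)}={}^{\alpha(0)}{e}=e$, which in particular shows $\mathcal{P}(X,e)$ is a subobject. For condition~\ref{cond:ideal3}, $({}^{\beta}{\alpha}-\alpha)(0)={}^{\beta(0)}{\alpha(0)}-\alpha(0)={}^{e}{\alpha(0)}-\alpha(0)=\alpha(0)-\alpha(0)=e$. For normality, condition~\ref{cond:ideal1}, $(\alpha+\beta-\alpha)(0)=\alpha(0)+\beta(0)-\alpha(0)=\alpha(0)+e-\alpha(0)=e$. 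In each case the evaluated representative starts at $e$, so the corresponding class belongs to $\mathcal{P}(X,e)$.

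The computations themselves are routine; the only points demanding care are the two structural properties of the self-action of $X$ that must be invoked, and matching them to the right condition. Namely, condition~\ref{cond:ideal2} relies on the action being by automorphisms, so that ${}^{g}{e}=e$, while condition~\ref{cond:ideal3} relies on it being a left action, so that ${}^{e}{g}=g$. No analytic obstacle remains, since continuity and homotopy invariance were already absorbed into the previous proposition.
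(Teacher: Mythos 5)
Your proof is correct and follows essentially the same route as the paper: both verify conditions \ref{cond:ideal2} and \ref{cond:ideal3} of Definition \ref{def:ideal} by evaluating representatives at $t=0$, using ${}^{g}{e}=e$ and ${}^{e}{g}=g$ exactly as the paper does. The only difference is that the paper simply cites the normal-subgroup property as already known, whereas you check it (and well-definedness on homotopy classes) explicitly, which is a harmless strengthening rather than a different approach.
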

\begin{proof}
We already know that $\mathcal{P}(X,e)$ is a normal subgroup of $\pi(X)$. We only need to show that conditions \ref{cond:ideal2} and \ref{cond:ideal3} of Definition \ref{def:ideal} are satisfied. Let $[\alpha]\in\pi(X)$ and $[\beta]\in\mathcal{P}(X,e)$.
\begin{enumerate}[leftmargin=1cm]
\item[(ii)] Then $\beta(0)=e$ and we get
\begin{equation*}
\left({}^{\alpha}{\beta}\right)(0)={}^{\alpha(0)}{\beta(0)}={}^{\alpha(0)}{e}=e
\end{equation*}
and this implies that ${}^{[\alpha]}{[\beta]}\in\mathcal{P}(X,e)$.
\item[(iii)] For the third condition we get
\begin{equation*}
\left({}^{\beta}{\alpha}-\alpha\right)(0)={}^{\beta(0)}{\alpha(0)}-\alpha(0)={}^{e}{\alpha(0)}-\alpha(0)=\alpha(0)-\alpha(0)=e
\end{equation*}
which implies ${}^{[\beta]}{[\alpha]}-[\alpha]\in\mathcal{P}(X,e)$.
\end{enumerate}
\end{proof}

\begin{example}
As in Example \ref{ex:subgrpgxmod} for a self-acting topological group $X$ with the identity element $e$ the triple $(\mathcal{P}(X,e),\pi(X),i)$ is an aspherical generalized crossed module.
\end{example}

\begin{corollary}
Let $\wt{X}$ and $X$ be two self-acting topological groups and $p\colon\wt{X}\rightarrow X$ be a covering morphism of topological groups such that $p\left({}^{{\wt{x}}_{1}}{{\wt{x}}_{2}}\right)={}^{p({\wt{x}}_{1})}{p({\wt{x}}_{2})}$ for any ${\wt{x}}_{1},{\wt{x}}_{2}\in\wt{X}$. Then \[\langle p^{\ast}|_{\mathcal{P}(\wt{X},\wt{e})},p^{\ast}\rangle\colon(\mathcal{P}(\wt{X},\wt{e}),\pi(\wt{X}),\wt{i})\rightarrow(\mathcal{P}(X,e),\pi(X),i)\] becomes a covering morphism of generalized crossed modules where $p^{\ast}\colon\pi(\wt{X})\rightarrow\pi(X)$ given by $p^{\ast}([\wt{\alpha}])=[p\wt{\alpha}]$ for any $[\wt{\alpha}]\in\pi(\wt{X})$. 
\end{corollary}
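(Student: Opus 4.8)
The plan is to verify the three requirements in turn: that $p^{\ast}$ is a morphism in \Gwa{}, that $\langle p^{\ast}|_{\mathcal{P}(\wt{X},\wt{e})},p^{\ast}\rangle$ satisfies the two axioms of a generalized crossed module morphism, and finally that $f:=p^{\ast}|_{\mathcal{P}(\wt{X},\wt{e})}$ is an isomorphism, which is the defining extra condition for a \emph{covering} morphism. Since $p$ is a covering morphism of topological groups it is in particular a continuous group homomorphism, so $p(\wt{e})=e$ and $p^{\ast}([\wt{\alpha}])=[p\wt{\alpha}]$ is a well-defined group homomorphism $\pi(\wt{X})\to\pi(X)$ (homotopy invariance coming from continuity of $p$); moreover $p$ carries paths based at $\wt{e}$ to paths based at $e$, so $f$ genuinely lands in $\mathcal{P}(X,e)$.

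First I would check that $p^{\ast}$ preserves the self-action. Using the hypothesis $p({}^{\wt{x}_1}{\wt{x}_2})={}^{p(\wt{x}_1)}{p(\wt{x}_2)}$ pointwise, for paths $\wt{\alpha},\wt{\beta}$ one computes $p\bigl({}^{\wt{\alpha}}{\wt{\beta}}\bigr)(t)=p\bigl({}^{\wt{\alpha}(t)}{\wt{\beta}(t)}\bigr)={}^{p(\wt{\alpha}(t))}{p(\wt{\beta}(t))}=\bigl({}^{p\wt{\alpha}}{p\wt{\beta}}\bigr)(t)$, whence $p^{\ast}({}^{[\wt{\alpha}]}{[\wt{\beta}]})={}^{p^{\ast}[\wt{\alpha}]}{p^{\ast}[\wt{\beta}]}$; thus $p^{\ast}$ is a morphism in \Gwa{}. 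The same identity immediately yields the action-compatibility axiom $f({}^{[\wt{\beta}]}{[\wt{\alpha}]})={}^{p^{\ast}[\wt{\beta}]}{f([\wt{\alpha}])}$ for the induced action of $\pi(\wt{X})$ on $\mathcal{P}(\wt{X},\wt{e})$, while the commutativity condition $p^{\ast}\wt{i}=i\,f$ is immediate since both composites send a class $[\wt{\alpha}]$ to $[p\wt{\alpha}]$ viewed inside $\pi(X)$. Hence $\langle f,p^{\ast}\rangle$ is a morphism of generalized crossed modules.

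The main step, and the one I expect to be the genuine obstacle, is proving that $f$ is a bijection; this is where covering-space theory enters. For surjectivity I would invoke the unique path lifting property of the covering $p$: any class in $\mathcal{P}(X,e)$ has a representative $\alpha$ with $\alpha(0)=e$, which lifts to a unique path $\wt{\alpha}$ in $\wt{X}$ with $\wt{\alpha}(0)=\wt{e}$ and $p\wt{\alpha}=\alpha$, whence $f([\wt{\alpha}])=[\alpha]$. For injectivity, suppose $f([\wt{\alpha}])=f([\wt{\beta}])$, so that $p\wt{\alpha}$ and $p\wt{\beta}$ are homotopic rel endpoints; the homotopy lifting property then produces a lift of this homotopy issuing from $\wt{e}$, and since $\wt{\alpha}$ and $\wt{\beta}$ are the unique lifts of $p\wt{\alpha}=p\wt{\beta}$ starting at $\wt{e}$, the lifted homotopy joins $\wt{\alpha}$ to $\wt{\beta}$, giving $[\wt{\alpha}]=[\wt{\beta}]$. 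The points demanding care are that the homotopies fix the relevant basepoint so the classes remain inside $\mathcal{P}$, and that the lifted homotopy really has the two chosen lifts as its boundary paths; these are exactly where the covering hypotheses on $p$ do the work. Finally, a bijective homomorphism in \Gwa{} is automatically an isomorphism, so $f$ is an isomorphism and $\langle f,p^{\ast}\rangle$ is therefore a covering morphism of generalized crossed modules.
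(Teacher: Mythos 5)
Your proposal is correct, and it is essentially the intended argument: the paper states this corollary without any written proof, leaving it as a consequence of the preceding propositions (that $\pi(X)$ is a group with action and $(\mathcal{P}(X,e),\pi(X),i)$ is a generalized crossed module) together with standard covering space theory. Your verification that $p^{\ast}$ preserves the pointwise self-action via the equivariance hypothesis, and your unique-path-lifting and homotopy-lifting argument for the bijectivity of $p^{\ast}|_{\mathcal{P}(\wt{X},\wt{e})}$ (which is exactly the classical fact that a covering map induces star bijections, i.e.\ a covering morphism, on fundamental groupoids), fill in precisely the steps the paper leaves implicit, including the needed care that the lifted homotopy is rel endpoints so that classes stay in $\mathcal{P}(\wt{X},\wt{e})$ and that a bijective morphism in \Gwa{} is an isomorphism.
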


%\begin{proposition}
%Let $X$ be a self-acting topological group. Then $(\mathcal{P}(X,0),X,t)$ becomes a generalized crossed module where $t$ is the restriction of the final point map.
%\end{proposition}
%\begin{proof}
%First of all we need to show that $t$ is a morphism in \Gwa{}. It is well known that $t$ is a group homomorphism. So we only need to prove that $t$ preserves the action. Let $[\alpha],[\beta]\in \mathcal{P}(X,0)$. Then
%\begin{alignat*}{2}
%	t\left( {}^{[\alpha]}{[\beta]} \right) & = t\left( \left[ {}^{\alpha}{\beta} \right] \right) \\ 
%	& = ({}^{\alpha}{\beta})(1) \\
%	& = {}^{\alpha(1)}{\beta(1)} \\
%	& = {}^{t([\alpha])}{t([\beta])}.
%\end{alignat*}
%This proves that $t$ is a morphism in \Gwa{}. 
%
%Now wee prove that $t$ satisfies the conditions given in Definition \ref{def:genxmod}. Here the action of $X$ on $\mathcal{P}(X,0)$ is given by
%\[\begin{array}{ccll}
%X\times \mathcal{P}(X,0) & \longrightarrow & \mathcal{P}(X,0)\\
%\big(x,[\alpha]\big)     & \longmapsto     & x\cdot[\alpha]={}^{\left[1_{x}\right]}{\left[\alpha\right]}
%\end{array}\] 
%\begin{enumerate}[label=(\roman{*}), leftmargin=1cm]
%\item Let 
%
%$\alpha(b\cdot a)={}^{b}{\alpha(a)}$,
%\item $\alpha(a)\cdot a_1={}^{a}{a_1}$
%\end{enumerate} 
%\end{proof}

\section{Liftings of generalized crossed modules}

Liftings of crossed modules were defined by Mucuk and Şahan \cite{Mucuk2019} as the interpretations of group-groupoid actions on groups in the category of crossed modules. See \cite{Tuncar2019,Demir2021} for further details on liftings. In that paper they prove a natural equivalence between the category of coverings of a given crossed module and the category of liftings of that crossed module. In the light of this result, now we define the notion of a lifting of a generalized crossed module and give a generalization of the equivalence given in \cite{Mucuk2019}.

\begin{definition}
Let $(A,B,\alpha)$ be a generalized crossed module, let $X$ be an object in \Gwa{} and let $\omega\colon X\rightarrow B$ be a group homomorphism. In this case there is an action of $X$ on $A$ via $\omega$, i.e. $x\cdot a=\omega(x)\cdot a$. If there is a generalized crossed module $(A,X,\varphi)$ such that $\omega\varphi=\alpha$ then we say that $(A,X,\varphi)$ is a lifting crossed module of $(A,B,\alpha)$.
\end{definition}
\[\xymatrix{ & X \ar[d]^\omega\\
	A \ar[r]_-\alpha \ar@{-->}[ur]^\varphi & B}\]

\begin{example}
Let $(A,B,\alpha)$ be a generalized crossed module. Then $(A,\alpha(A),\alpha)$ is a lifting of $(A,B,\alpha)$ over the inclusion homomorphism $i\colon \alpha(A)\rightarrow B$.
\end{example}

\begin{example}
Let $(A,B,\alpha)$ be a generalized crossed module and let $(A,B',\widetilde{\alpha})$ be the generalized crossed module as in Lemma \ref{lem:isogxmod1}. Then $(A,B,\alpha)$ is a lifting of $(A,B',\widetilde{\alpha})$ over $f\colon B\rightarrow B'$.
\end{example}

\begin{example}
Let $(A,B,\alpha)$ be a generalized crossed module and let $(A',B,\widehat{\alpha})$ be the generalized crossed module as in Lemma \ref{lem:isogxmod2}. Since every isomorphism in \Gwa{} is a generalized crossed module then $(A',A,g)$ is a lifting of $(A',B,\widehat{\alpha})$ over $\alpha\colon A\rightarrow B$.
\end{example}

\begin{example}
Let $(A,B,\alpha)$ be a generalized crossed module. Then $(A,A/\ker\alpha,p)$ becomes a lifting of $(A,B,\alpha)$ over $\omega\colon A/\ker\alpha\rightarrow B$, $a+\ker\alpha\mapsto\alpha(a)$. In this example $(A,A/\ker\alpha,p)$ is called the natural lifting of $(A,B,\alpha)$.
\end{example}

\begin{lemma}\label{lem:liftprexmod}
Let $(A,B,\alpha)$ be a generalized crossed module, let $X$ be an object in \Gwa{} and let $\varphi\colon A\rightarrow X$, $\omega\colon X\rightarrow B$ be two group homomorphisms such that $\omega\varphi=\alpha$. Then $(A,X,\varphi)$ is a generalized crossed module, hence a lifting of $(A,B,\alpha)$, if and only if $\varphi(x\cdot a)={}^{x}{\varphi(a)}$ for all $x\in X$ and $a\in A$.
\end{lemma}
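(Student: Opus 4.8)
The plan is to verify the two defining axioms of a generalized crossed module (Definition \ref{def:genxmod}) for the triple $(A,X,\varphi)$, where $X$ acts on $A$ through $\omega$ by $x\cdot a=\omega(x)\cdot a$. The key structural observation is that axiom \ref{defcond:genxmod1}, namely $\varphi(x\cdot a)={}^{x}{\varphi(a)}$, is literally the condition appearing in the statement, whereas axiom \ref{defcond:genxmod2} will turn out to hold automatically from the fact that $(A,B,\alpha)$ is already a generalized crossed module. So the equivalence reduces to showing that the second axiom comes for free, and hence all the content of ``being a generalized crossed module'' is carried by the first axiom.

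First I would dispose of the forward implication, which is immediate: if $(A,X,\varphi)$ is a generalized crossed module, then by definition it satisfies axiom \ref{defcond:genxmod1}, and that is exactly the relation $\varphi(x\cdot a)={}^{x}{\varphi(a)}$. Nothing further is required in this direction.

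For the converse, assume $\varphi(x\cdot a)={}^{x}{\varphi(a)}$ for all $x\in X$ and $a\in A$. This assumption is precisely axiom \ref{defcond:genxmod1}, so it remains only to establish axiom \ref{defcond:genxmod2}, i.e.\ $\varphi(a)\cdot a_1={}^{a}{a_1}$ for all $a,a_1\in A$. Here I would unfold the induced action step by step: by the definition of the $X$-action on $A$ one has $\varphi(a)\cdot a_1=\omega(\varphi(a))\cdot a_1$, then the hypothesis $\omega\varphi=\alpha$ rewrites the right-hand side as $\alpha(a)\cdot a_1$, and finally axiom \ref{defcond:genxmod2} for the given generalized crossed module $(A,B,\alpha)$ yields $\alpha(a)\cdot a_1={}^{a}{a_1}$. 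Chaining these equalities gives axiom \ref{defcond:genxmod2} for $(A,X,\varphi)$, completing the verification.

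There is no genuine obstacle here; the only point demanding care is the bookkeeping of which action the symbol $\cdot$ denotes at each stage, since it refers to the induced $X$-action on the left of $\varphi(a)\cdot a_1=\omega(\varphi(a))\cdot a_1$ but to the original $B$-action on the right. The mechanism that makes the second axiom transfer automatically is exactly the relation $\omega\varphi=\alpha$, which funnels the $X$-action back through $\alpha$. Since $A$ and $X$ are already objects of \Gwa{} and $\varphi$ is a group homomorphism by hypothesis, no further structural checks are needed; as the remark following Definition \ref{def:genxmod} observes, $\varphi$ automatically preserves the self-action once the two axioms hold.
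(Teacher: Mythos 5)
Your proposal is correct and matches the paper's own proof essentially verbatim: both note that axiom \ref{defcond:genxmod1} is exactly the stated condition (making the forward direction immediate) and verify axiom \ref{defcond:genxmod2} via the same chain $\varphi(a)\cdot a_1=\omega(\varphi(a))\cdot a_1=\alpha(a)\cdot a_1={}^{a}{a_1}$ using $\omega\varphi=\alpha$. No discrepancies to report.
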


\begin{proof}
We only need to show that $(A,X,\varphi)$ satisfies condition \ref{defcond:genxmod2} of Definition \ref{def:genxmod} since condition \ref{defcond:genxmod1} is already satisfied from the assumption. Let $a,a_1\in A$. Then
\begin{alignat*}{2}
{\varphi(a)}\cdot{a_1} & = \omega(\varphi(a))\cdot{a_1} \\
& = \alpha(a)\cdot a_1 \\
& = {}^{a}{a_1}.
\end{alignat*}
This completes the proof.
\end{proof}

\begin{proposition}
Let $(A,B,\alpha)$ be a generalized crossed module and let $(A,X,\varphi)$ be a lifting of $(A,B,\alpha)$ over the group homomorphism $\omega\colon X\rightarrow B$. Then $\ker \varphi \subseteq \ker \alpha$ and $\langle 1_A, \omega \rangle\colon (A,X,\varphi)\rightarrow(A,B,\alpha)$ is a generalized crossed module morphism.
\end{proposition}

\begin{proof}
Let $x\in \ker \varphi$. Then $\varphi(x)=0$ and
\begin{alignat*}{2}
\alpha(x) & = \omega\varphi(x) \\
& = \omega(\varphi(x)) \\
& = \omega(0) \\
& = 0.
\end{alignat*} 
Thus $x\in \ker\alpha$ and $\ker \varphi \subseteq \ker \alpha$. In order to prove that $\langle 1_A, \omega \rangle\colon (A,X,\varphi)\rightarrow(A,B,\alpha)$ is a generalized crossed module morphism, first we need to show that the diagram
\[\xymatrix{
	A \ar[d]_{1_{A}} \ar[r]^-{\varphi}  & X \ar[d]^-{\omega}  \\
	A \ar[r]_-{\alpha}   & B }
\]
is commutative. It is easy to see that the above diagram is commutative since $(A,X,\varphi)$ is a lifting of $(A,B,\alpha)$, i.e. $\omega\varphi=\alpha$. Finally, let $x\in X$ and $a\in A$. Then
\[
1_A(x\cdot a) = x\cdot a = \omega(x)\cdot a = \omega(x)\cdot 1_A(a).
\]
Hence $\langle 1_A, \omega \rangle\colon (A,X,\varphi)\rightarrow(A,B,\alpha)$ is a morphism of generalized crossed module. This completes the proof.
\end{proof}

\begin{corollary}
Any lifting $(A,X,\varphi)$ of an aspherical generalized crossed module $(A,B,\alpha)$ is aspherical.
\end{corollary}

\begin{example}
Let $(H,G,i)$ be a generalized crossed module as in Example \ref{ex:subgrpgxmod}. Since $i$ is injective $(H,G,i)$ is aspherical and thus any lifting $(H,X,\varphi)$ of $(H,G,i)$ is also aspherical. Moreover $H$ can be considered as a subgroup of $X$ which is invariant under the self action of $X$.
\end{example}

\begin{lemma}
Let $(A,X,\varphi)$ be a lifting of a generalized crossed module $(A,B,\alpha)$ over $\omega\colon X\rightarrow B$ and let there exist isomorphisms $f\colon X\rightarrow X'$ and $g\colon B\rightarrow B'$ in \Gwa{}. Then $(A,X',\varphi')$ is a lifting of $(A,B',\alpha')$ over $\omega':=g\omega f^{-1}$ where $\varphi'=f\varphi$ and $\alpha'=g\alpha$. 
\end{lemma}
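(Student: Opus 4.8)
The plan is to invoke Lemma \ref{lem:liftprexmod}, which reduces the assertion that $(A,X',\varphi')$ is a lifting of $(A,B',\alpha')$ to a single compatibility condition between $\varphi'$ and the action on $A$. First I would dispatch the routine checks: $X'$ is an object in \Gwa{} because the isomorphism $f$ transports the \Gwa{}-structure of $X$ onto it; the maps $\varphi'=f\varphi$ and $\omega'=g\omega f^{-1}$ are group homomorphisms as composites of such; and the lifting relation holds, since
\[
\omega'\varphi'=g\omega f^{-1}f\varphi=g\omega\varphi=g\alpha=\alpha',
\]
using the hypothesis $\omega\varphi=\alpha$.

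Next I would carefully pin down the various actions on $A$. The target $(A,B',\alpha')$ is exactly the generalized crossed module produced by Lemma \ref{lem:isogxmod1} for the isomorphism $g$, so the action of $B'$ on $A$ is $b'\cdot a=g^{-1}(b')\cdot a$. Consequently the action of $X'$ on $A$ via $\omega'$ unwinds as
\[
x'\cdot a=\omega'(x')\cdot a=g^{-1}\!\left(g\omega f^{-1}(x')\right)\cdot a=\omega\!\left(f^{-1}(x')\right)\cdot a=f^{-1}(x')\cdot a,
\]
where the last expression denotes the action of $X$ on $A$ via $\omega$. This ``bridge'' identity, relating the new $X'$-action back to the given $X$-action, is the crux of the argument.

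With this in hand, the remaining task is to verify the condition of Lemma \ref{lem:liftprexmod} for $(A,X',\varphi')$, namely $\varphi'(x'\cdot a)={}^{x'}{\varphi'(a)}$ for all $x'\in X'$ and $a\in A$. I would compute
\[
\varphi'(x'\cdot a)=f\varphi\!\left(f^{-1}(x')\cdot a\right)=f\bigl({}^{f^{-1}(x')}{\varphi(a)}\bigr)={}^{f(f^{-1}(x'))}{f\varphi(a)}={}^{x'}{\varphi'(a)},
\]
where the middle equality is Lemma \ref{lem:liftprexmod} applied to the given lifting $(A,X,\varphi)$, and the following equality uses that $f$ is a morphism in \Gwa{}, hence preserves the self-action.

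The hard part is purely bookkeeping: keeping the four actions straight---the original action of $B$ on $A$, the transported $B'$-action defined via $g^{-1}$, the $X$-action via $\omega$, and the $X'$-action via $\omega'$---and confirming they compose exactly as claimed in the displayed bridge identity. Once that identity is secured, the two genuine inputs, Lemma \ref{lem:liftprexmod} and the fact that $f$ respects self-actions, close the argument at once.
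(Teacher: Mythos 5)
Your proof is correct, and it takes a route that differs in a substantive way from the paper's. The paper establishes that $(A,X',\varphi')$ is a generalized crossed module by applying Lemma \ref{lem:isogxmod1} a second time, with the isomorphism $f\colon X\rightarrow X'$ (the paper's citation of ``Lemma \ref{defcond:genxmod1}'' is evidently a typo for Lemma \ref{lem:isogxmod1}), and then its entire written argument is the commutativity computation $\omega'\varphi'=(g\omega f^{-1})(f\varphi)=g\alpha=\alpha'$, which you also perform. You instead use Lemma \ref{lem:isogxmod1} only for the base $(A,B',\alpha')$ and then verify the lifting condition of Lemma \ref{lem:liftprexmod} directly. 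The genuine added value of your route is the ``bridge identity''
\[
x'\cdot a=\omega'(x')\cdot a=g^{-1}\!\left(g\omega f^{-1}(x')\right)\cdot a=\omega\!\left(f^{-1}(x')\right)\cdot a=f^{-1}(x')\cdot a,
\]
which the paper never writes down: the definition of a lifting requires the $X'$-action on $A$ to be the one induced via $\omega'$ from the $B'$-action of Lemma \ref{lem:isogxmod1}, whereas transporting the structure along $f$ equips $A$ with the action $x'\cdot a=f^{-1}(x')\cdot a$; that these coincide is exactly what your identity checks, and the paper leaves it tacit. So your argument is slightly longer but closes an implicit identification in the paper's proof, at the cost of invoking Lemma \ref{lem:liftprexmod} and the fact that $f$ preserves self-actions, both of which you apply correctly.
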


\begin{proof}
We know from Lemma \ref{defcond:genxmod1} that $(A,X',\varphi')$ and $(A,B',\alpha')$ are generalized crossed modules. So we only need to prove the commutativity, i.e. $\omega'\varphi'=\alpha'$. Since $(A,X,\varphi)$ be a lifting of a generalized crossed module $(A,B,\alpha)$ over $\omega\colon X\rightarrow B$ then $\omega\varphi=alpha$. So
\begin{alignat*}{2}
\omega'\varphi' & = (g\omega f^{-1})(f\varphi) \\
& = g\omega (f^{-1}f)\varphi \\
& = g\omega\varphi \\
& = g\alpha \\
& = \alpha'.
\end{alignat*}
This completes the proof.
\end{proof}

\begin{proposition}
Let $(A,X,\varphi)$ be a lifting of $(A,B,\alpha)$ over $\omega\colon X\rightarrow B$ and let $(A,X',\varphi')$ be a lifting of $(A,X,\varphi)$ over $\omega'\colon X'\rightarrow X$. Then $(A,X',\varphi')$ is a lifting of $(A,B,\alpha)$ over $\omega\omega'\colon X'\rightarrow B$.
\end{proposition}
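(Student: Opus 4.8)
The plan is to unwind the definition of lifting and check that the data $(A,X',\varphi')$ together with the composite $\omega\omega'\colon X'\rightarrow B$ literally satisfies every requirement of being a lifting of $(A,B,\alpha)$. Concretely there are four things to confirm: that $\omega\omega'$ is a group homomorphism with $X'$ an object of \Gwa{}; that the action of $X'$ on $A$ induced via $\omega\omega'$ agrees with the action already carried by the generalized crossed module $(A,X',\varphi')$; that $(A,X',\varphi')$ is a generalized crossed module; and that $(\omega\omega')\varphi'=\alpha$. The first is immediate, since a composite of homomorphisms is a homomorphism and $X'$ is an object of \Gwa{} by hypothesis, and the third is immediate because $(A,X',\varphi')$ is given to be a lifting, hence a generalized crossed module.

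The key step is to verify that the two descriptions of the $X'$-action on $A$ coincide. By definition the action coming from the lifting $(A,X',\varphi')$ over $\omega'$ is $x'\cdot a=\omega'(x')\cdot a$, where the right-hand side is the action of $X$ on $A$; and that action of $X$ on $A$, coming from the lifting $(A,X,\varphi)$ over $\omega$, is $\omega'(x')\cdot a=\omega(\omega'(x'))\cdot a$. Chaining these identities gives $x'\cdot a=(\omega\omega')(x')\cdot a$, which is exactly the action of $X'$ on $A$ induced via $\omega\omega'$ in the sense of the definition of lifting. Thus the generalized crossed module structure already present on $(A,X',\varphi')$ is precisely the structure prescribed by the homomorphism $\omega\omega'$, so condition \ref{defcond:genxmod2} of Definition \ref{def:genxmod} (which by Lemma \ref{lem:liftprexmod} is the only nontrivial one) is inherited without extra work.

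Finally I would compute $(\omega\omega')\varphi'=\omega(\omega'\varphi')=\omega\varphi=\alpha$, using $\omega'\varphi'=\varphi$ (from $(A,X',\varphi')$ being a lifting of $(A,X,\varphi)$) and $\omega\varphi=\alpha$ (from $(A,X,\varphi)$ being a lifting of $(A,B,\alpha)$). Combining this with the previous step shows that $(A,X',\varphi')$ is a generalized crossed module over $X'$ whose induced action on $A$ is via $\omega\omega'$ and which satisfies $(\omega\omega')\varphi'=\alpha$, which is exactly the statement that it is a lifting of $(A,B,\alpha)$ over $\omega\omega'$. The only point demanding genuine care -- the main obstacle, such as it is in a transitivity statement like this -- is the bookkeeping in the action-compatibility step, namely confirming that the a priori different descriptions of the $X'$-action on $A$ obtained by the two-stage construction agree with the single action induced by $\omega\omega'$; everything else is formal.
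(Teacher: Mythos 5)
Your proof is correct and takes essentially the same route as the paper's, whose entire argument is the composite computation $(\omega\omega')\varphi'=\omega(\omega'\varphi')=\omega\varphi=\alpha$. Your additional check that the two-stage action of $X'$ on $A$ (via $\omega'$ and then $\omega$) coincides with the action induced by $\omega\omega'$ is a point the paper silently subsumes under ``it is sufficient to show the commutativity,'' so you are if anything slightly more thorough.
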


\begin{proof}
It is sufficient to show the commutativity, i.e. $(\omega\omega')\varphi'=\alpha$. Since $(A,X,\varphi)$ is a lifting of $(A,B,\alpha)$ over $\omega\colon X\rightarrow B$ and $(A,X',\varphi')$ is a lifting of $(A,X,\varphi)$ over $\omega'\colon X'\rightarrow X$ then $\omega\varphi=\alpha$ and $\omega'\varphi'=\varphi$. So
\[(\omega\omega')\varphi'=\omega(\omega'\varphi')=\omega\varphi=\alpha.\]
This completes the proof.
\end{proof}

Let $(A,X,\varphi)$ and $(A,X',\varphi')$ be two liftings of a generalized crossed module $(A,B,\alpha)$ over $\omega\colon X\rightarrow B$ and $\omega'\colon X'\rightarrow B$, respectively. If there is a morphism $f\colon X\rightarrow X'$ such that $\omega'f=\omega$ then $f$ is called a morphism of liftings of $(A,B,\alpha)$.

\[\xymatrix@R=1cm@C=1cm{
	 &  & X \ar[dd]^-{f} \ar[dl]^-{ \omega }  \\
A \ar[r]^-{\alpha} \ar@/^/[urr]^-{\varphi} \ar@/_/[drr]_-{\varphi'}	& B    &  \\
     &  & X' \ar[ul]_-{\omega'} }
\]

For a given generalized crossed module $(A,B,\alpha)$ the liftings of $(A,B,\alpha)$ form a category with the morphisms defined above. This category is denoted by $\mathsf{\bf LGXM}/(A,B,\alpha)$.

\begin{lemma}
Let $f\colon X\rightarrow X'$ be a morphism between liftings $(A,X,\varphi)$ and $(A,X',\varphi')$ of $(A,B,\alpha)$ over $\omega\colon X\rightarrow B$ and $\omega'\colon X'\rightarrow B$ respectively. Then $(A,X,\varphi)$ is a lifting of $(A,X',\varphi')$ over $f$ if $\omega'$ is a monomorphism.
\end{lemma}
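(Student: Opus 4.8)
The plan is to unwind the definition of a lifting, now with $(A,X',\varphi')$ playing the role of the base generalized crossed module and $f\colon X\to X'$ playing the role of the structural homomorphism $\omega$. To say that $(A,X,\varphi)$ is a lifting of $(A,X',\varphi')$ over $f$ requires three things: that the action of $X$ on $A$ induced via $f$ be the appropriate one, that $(A,X,\varphi)$ be a generalized crossed module for it, and that the relevant triangle commute, i.e. $f\varphi=\varphi'$. I would first observe that the first two requirements come for free, and then reserve the actual work for the commutativity, which is the only place the monomorphism hypothesis is used.

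First I would settle the action. The action of $X$ on $A$ induced via $f$ is by definition $x\cdot a=f(x)\cdot a$, where on the right $f(x)\in X'$ acts through the lifting structure of $(A,X',\varphi')$, that is $f(x)\cdot a=\omega'(f(x))\cdot a$. Using the morphism-of-liftings identity $\omega' f=\omega$, this equals $\omega(x)\cdot a$, which is precisely the action of $X$ on $A$ that already makes $(A,X,\varphi)$ a lifting of $(A,B,\alpha)$. Hence the action induced via $f$ coincides with the one in hand, so $(A,X,\varphi)$ is \emph{already known} to be a generalized crossed module for it; in particular the condition $\varphi(x\cdot a)={}^{x}{\varphi(a)}$ of Lemma \ref{lem:liftprexmod} holds automatically, and nothing further need be checked there.

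The key step is the commutativity $f\varphi=\varphi'$. I would compute, using $\omega' f=\omega$, $\omega\varphi=\alpha$ and $\omega'\varphi'=\alpha$,
\[
\omega'(f\varphi)=(\omega' f)\varphi=\omega\varphi=\alpha=\omega'\varphi',
\]
so that $\omega'\circ(f\varphi)=\omega'\circ\varphi'$ as morphisms $A\to X'$ in \Gwa{}. Since $f\varphi$ and $\varphi'$ are both such morphisms and $\omega'$ is assumed to be a monomorphism, $\omega'$ is left-cancellable, whence $f\varphi=\varphi'$.

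The proof has no genuine obstacle: everything reduces to the cancellation above. The one point worth flagging is that the monomorphism assumption on $\omega'$ is exactly what is needed, and all that is needed, to pass from $\omega'(f\varphi)=\omega'\varphi'$ to $f\varphi=\varphi'$; no set-theoretic injectivity argument is required, only left-cancellability. Once $f\varphi=\varphi'$ is established, the defining conditions of a lifting are met and $(A,X,\varphi)$ is a lifting of $(A,X',\varphi')$ over $f$.
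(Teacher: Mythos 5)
Your proof is correct and follows essentially the same route as the paper: you derive $\omega'(f\varphi)=(\omega' f)\varphi=\omega\varphi=\alpha=\omega'\varphi'$ and cancel the monomorphism $\omega'$ to get $f\varphi=\varphi'$, which is exactly the paper's argument. Your preliminary check that the action induced via $f$ agrees with the given one (using $\omega' f=\omega$) is a small point the paper leaves implicit, and it is handled correctly.
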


\begin{proof}
Let $\omega'\colon X'\rightarrow B$ be a monomorphism.% in \Gwa{}. 
Since $(A,X,\varphi)$ and $(A,X',\varphi')$ are liftings of $(A,B,\alpha)$ over $\omega\colon X\rightarrow B$ and $\omega'\colon X'\rightarrow B$, respectively, then $\omega\varphi=\alpha=\omega'\varphi'$. Hence
\[\omega'(f\varphi)=(\omega'f)\varphi=\omega\varphi=\omega'\varphi'\] and since $\omega'\colon X'\rightarrow B$ is a monomorphism then $f\varphi=\varphi'$. Thus $(A,X,\varphi)$ is a lifting of $(A,X',\varphi')$ over $f$.
\end{proof}

\begin{proposition}
Let $\langle f,g\rangle\colon (\widetilde{A},\widetilde{B},\widetilde{\alpha})\rightarrow(A,B,\alpha)$ be a morphism of generalized crossed modules and let $(\widetilde{A},\widetilde{X},\widetilde{\varphi})$ be a lifting of $(\widetilde{A},\widetilde{B},\widetilde{\alpha})$ over $\widetilde{\omega}\colon \widetilde{X}\rightarrow \widetilde{B}$. Then $\langle f,\widetilde{g}\rangle\colon (\widetilde{A},\widetilde{X},\widetilde{\varphi})\rightarrow(A,B,\alpha)$ is a morphism of generalized crossed modules where $\widetilde{g}=g\widetilde{\omega}$.	
\end{proposition}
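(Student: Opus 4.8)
The plan is to verify directly that the pair $\langle f,\widetilde{g}\rangle$ satisfies the two defining conditions of a morphism of generalized crossed modules: commutativity of the defining square and compatibility with the actions. First I would observe that $\widetilde{g}=g\widetilde{\omega}$ is a group homomorphism $\widetilde{X}\rightarrow B$, being the composite of $\widetilde{\omega}\colon\widetilde{X}\rightarrow\widetilde{B}$ and $g\colon\widetilde{B}\rightarrow B$, while $f\colon\widetilde{A}\rightarrow A$ is unchanged from the given morphism. Hence $\langle f,\widetilde{g}\rangle$ has the correct shape as a candidate morphism $(\widetilde{A},\widetilde{X},\widetilde{\varphi})\rightarrow(A,B,\alpha)$.

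For the commutativity of the square, I would combine the lifting relation $\widetilde{\omega}\widetilde{\varphi}=\widetilde{\alpha}$ with the hypothesis that $\langle f,g\rangle$ is a morphism, which yields $g\widetilde{\alpha}=\alpha f$. These give
\[
\widetilde{g}\widetilde{\varphi}=g\widetilde{\omega}\widetilde{\varphi}=g\widetilde{\alpha}=\alpha f,
\]
so the outer square commutes.

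For the action condition, the key point is to recall that in the lifting $(\widetilde{A},\widetilde{X},\widetilde{\varphi})$ the action of $\widetilde{X}$ on $\widetilde{A}$ is the one induced through $\widetilde{\omega}$, namely $x\cdot a=\widetilde{\omega}(x)\cdot a$, where the right-hand action is that of $\widetilde{B}$ on $\widetilde{A}$. Using this together with the action compatibility of $\langle f,g\rangle$, for $x\in\widetilde{X}$ and $a\in\widetilde{A}$ I would compute
\[
f(x\cdot a)=f\bigl(\widetilde{\omega}(x)\cdot a\bigr)=g(\widetilde{\omega}(x))\cdot f(a)=\widetilde{g}(x)\cdot f(a),
\]
where on the right the action is that of $B$ on $A$ inside $(A,B,\alpha)$.

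Both verifications are essentially formal bookkeeping, and there is no genuine obstacle. The only point requiring care is to keep straight which action is meant at each stage — the induced $\widetilde{X}$-action on $\widetilde{A}$ factoring through $\widetilde{\omega}$ on the source side, versus the given $B$-action on $A$ on the target side — and to invoke the definition of lifting for the former.
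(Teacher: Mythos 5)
Your proof is correct and follows the same route as the paper: the key computation $\widetilde{g}\widetilde{\varphi}=g\widetilde{\omega}\widetilde{\varphi}=g\widetilde{\alpha}=\alpha f$ is exactly the paper's argument. You are in fact slightly more thorough, since the paper declares the commutativity of the square sufficient and omits the action-compatibility check $f(x\cdot a)=f(\widetilde{\omega}(x)\cdot a)=g(\widetilde{\omega}(x))\cdot f(a)=\widetilde{g}(x)\cdot f(a)$, which you carry out correctly using the fact that the $\widetilde{X}$-action on $\widetilde{A}$ factors through $\widetilde{\omega}$.
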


\begin{proof}
It is sufficient to prove the commutativity of the following diagram.
\[\xymatrix{
	\widetilde{A} \ar[d]_{f} \ar[r]^-{\widetilde{\varphi}}  & \widetilde{X} \ar[d]^-{\widetilde{g}}  \\
	A \ar[r]_-{\alpha}   & B }
\]
Since $\widetilde{g}=g\widetilde{\omega}$ then
\[\widetilde{g}\widetilde{\varphi}=(g\widetilde{\omega})\widetilde{\varphi}=g(\widetilde{\omega}\widetilde{\varphi})=g\widetilde{\alpha}=\alpha f\]
and this completes the proof.
\end{proof}

Following theorem gives a criteria for lifting a generalized crossed module morphism to any lifting.

\begin{theorem}
Let $\langle f,g\rangle\colon (\widetilde{A},\widetilde{B},\widetilde{\alpha})\rightarrow(A,B,\alpha)$ be a morphism of generalized crossed modules such that $(\widetilde{A},\widetilde{B},\widetilde{\alpha})$ is simply connected and let $(A,X,\varphi)$ be a lifting of $(A,B,\alpha)$ over $\omega\colon X\rightarrow B$. Then there exist a generalized crossed module morphism $\langle f,\widetilde{g}\rangle\colon (\widetilde{A},\widetilde{B},\widetilde{\alpha})\rightarrow(A,X,\varphi)$ such that $\omega\widetilde{g}=g$ if and only if $f(\ker \widetilde{\alpha})\subseteq \ker \varphi$.
\end{theorem}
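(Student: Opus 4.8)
The plan is to prove the two implications separately, with the forward direction being immediate and the reverse direction requiring an explicit construction of $\widetilde{g}$ whose only delicate point is well-definedness.

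For the ``only if'' direction I would simply assume a morphism $\langle f,\widetilde{g}\rangle\colon(\widetilde{A},\widetilde{B},\widetilde{\alpha})\rightarrow(A,X,\varphi)$ exists and read off the needed inclusion from its defining commutative square $\varphi f=\widetilde{g}\widetilde{\alpha}$. Indeed, for $\widetilde{a}\in\ker\widetilde{\alpha}$ one gets $\varphi(f(\widetilde{a}))=\widetilde{g}(\widetilde{\alpha}(\widetilde{a}))=\widetilde{g}(0)=0$, so $f(\widetilde{a})\in\ker\varphi$, giving $f(\ker\widetilde{\alpha})\subseteq\ker\varphi$ with no further work.

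For the ``if'' direction, the key idea is that since $(\widetilde{A},\widetilde{B},\widetilde{\alpha})$ is simply connected, $\widetilde{\alpha}$ is surjective, so every $\widetilde{b}\in\widetilde{B}$ can be written as $\widetilde{b}=\widetilde{\alpha}(\widetilde{a})$ for some $\widetilde{a}\in\widetilde{A}$. I would then define $\widetilde{g}\colon\widetilde{B}\rightarrow X$ by $\widetilde{g}(\widetilde{b})=\varphi(f(\widetilde{a}))$, so that $\widetilde{g}\widetilde{\alpha}=\varphi f$ holds by construction. The main obstacle---and the \emph{only} place where the hypothesis $f(\ker\widetilde{\alpha})\subseteq\ker\varphi$ is used---is the well-definedness of $\widetilde{g}$: if $\widetilde{\alpha}(\widetilde{a}_1)=\widetilde{\alpha}(\widetilde{a}_2)$ then $\widetilde{a}_1-\widetilde{a}_2\in\ker\widetilde{\alpha}$, hence $f(\widetilde{a}_1-\widetilde{a}_2)\in\ker\varphi$ by assumption, and applying $\varphi$ (a homomorphism) yields $\varphi f(\widetilde{a}_1)=\varphi f(\widetilde{a}_2)$.

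Once $\widetilde{g}$ is well-defined, the remaining checks are routine and all proceed by choosing a preimage under $\widetilde{\alpha}$ and transporting the corresponding identities for $f$, $\varphi$, and $\omega$. I would verify in turn: that $\widetilde{g}$ is a group homomorphism (inherited from $\widetilde{\alpha}$ and $\varphi f$ being homomorphisms); that $\widetilde{g}$ is a morphism in \Gwa{}, using that $\widetilde{\alpha}$ and $\varphi$ preserve the self-action (by the remark after Definition \ref{def:genxmod}) and that $f$ does too (which itself follows from the morphism axioms of $\langle f,g\rangle$ together with condition \ref{defcond:genxmod2}), so that ${}^{\widetilde{b}_1}{\widetilde{b}_2}=\widetilde{\alpha}({}^{\widetilde{a}_1}{\widetilde{a}_2})$ is sent to ${}^{\widetilde{g}(\widetilde{b}_1)}{\widetilde{g}(\widetilde{b}_2)}$; and that $\omega\widetilde{g}=g$, since $\omega\widetilde{g}(\widetilde{b})=\omega\varphi f(\widetilde{a})=\alpha f(\widetilde{a})=g\widetilde{\alpha}(\widetilde{a})=g(\widetilde{b})$ using $\omega\varphi=\alpha$ and $\alpha f=g\widetilde{\alpha}$. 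Finally, $\langle f,\widetilde{g}\rangle$ is a morphism of generalized crossed modules: the commutativity is the defining property of $\widetilde{g}$, and the action compatibility follows because the action of $X$ on $A$ is defined via $\omega$, so $\widetilde{g}(\widetilde{b})\cdot f(\widetilde{a})=\omega(\widetilde{g}(\widetilde{b}))\cdot f(\widetilde{a})=g(\widetilde{b})\cdot f(\widetilde{a})=f(\widetilde{b}\cdot\widetilde{a})$, the last equality being the action-compatibility of $\langle f,g\rangle$. I expect no genuine difficulty beyond the well-definedness step, which is the crux of the statement.
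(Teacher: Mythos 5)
Your proposal is correct and follows essentially the same route as the paper: define $\widetilde{g}(\widetilde{b})=\varphi f(\widetilde{a})$ for a preimage $\widetilde{a}$ under the surjection $\widetilde{\alpha}$, with well-definedness being exactly where $f(\ker\widetilde{\alpha})\subseteq\ker\varphi$ enters, and the converse read off from the commutative square $\widetilde{g}\widetilde{\alpha}=\varphi f$. In fact you supply the verifications (homomorphism property, preservation of the self-action via condition (ii) applied to $f$, the identity $\omega\widetilde{g}=g$, and the action compatibility through $x\cdot a=\omega(x)\cdot a$) that the paper dismisses as ``straightforward so omitted,'' and your ``only if'' direction is stated slightly more cleanly, invoking the assumed morphism's commutative square rather than the constructed $\widetilde{g}$.
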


\begin{proof}
First assume that $f(\ker \widetilde{\alpha})\subseteq \ker \varphi$. Let $\widetilde{g}\colon\widetilde{B}\rightarrow X$ is given by $\widetilde{g}(\widetilde{b})=\varphi f(\widetilde{a})$ for some $\widetilde{a}\in \widetilde{A}$ such that $\widetilde{\alpha}(\widetilde{a})=\widetilde{b}$. The function $\widetilde{g}$ is well-defined since $(\widetilde{A},\widetilde{B},\widetilde{\alpha})$ is simply connected, i.e. $\widetilde{\alpha}$ is surjective, and $f(\ker \widetilde{\alpha})\subseteq \ker \varphi$. It is easy to see that $\widetilde{g}$ is a morphism in \Gwa{}. Other details are straightforward so omitted.

Conversely, let $\widetilde{x}\in\ker\widetilde{\alpha}$. Then by the definition of $\widetilde{g}$
\[\varphi(f(\widetilde{x}))=\widetilde{g}(\widetilde{\alpha}(\widetilde{x}))=\widetilde{g}(0)=0\]
and thus $f(\widetilde{x})\in\ker \varphi$. This completes the proof.
\end{proof}

\begin{corollary}
For a generalized crossed module $(A,B,\alpha)$ with two liftings $(A,X,\varphi)$ and $(A,\widetilde{X},\widetilde{\varphi})$ via $\omega$ and $\widetilde{\omega}$, respectively, such that $(A,\widetilde{X},\widetilde{\varphi})$ is simply connected. In this case $\ker\widetilde{\varphi}\subseteq\ker\varphi$ if and only if $(A,\widetilde{X},\widetilde{\varphi})$ is also a lifting of $(A,X,\varphi)$.
\end{corollary}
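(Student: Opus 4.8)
The plan is to obtain this as a direct consequence of the preceding theorem, after relabelling the data appropriately. First I would apply the earlier proposition which guarantees that for the lifting $(A,\widetilde{X},\widetilde{\varphi})$ of $(A,B,\alpha)$ over $\widetilde{\omega}$, the pair $\langle 1_A,\widetilde{\omega}\rangle\colon(A,\widetilde{X},\widetilde{\varphi})\rightarrow(A,B,\alpha)$ is a morphism of generalized crossed modules. Since $(A,\widetilde{X},\widetilde{\varphi})$ is assumed to be simply connected, this morphism satisfies exactly the hypotheses of the theorem, with $\langle f,g\rangle=\langle 1_A,\widetilde{\omega}\rangle$ and with $(A,X,\varphi)$ in the role of the lifting of $(A,B,\alpha)$ over $\omega$.

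Under this identification the source crossed module is $(A,\widetilde{X},\widetilde{\varphi})$, so the theorem's map $\widetilde{\alpha}$ is $\widetilde{\varphi}$ and $f=1_A$; hence the theorem's kernel hypothesis $f(\ker\widetilde{\alpha})\subseteq\ker\varphi$ reads precisely as $\ker\widetilde{\varphi}\subseteq\ker\varphi$. The theorem then yields, under this hypothesis, a generalized crossed module morphism $\langle 1_A,\widetilde{g}\rangle\colon(A,\widetilde{X},\widetilde{\varphi})\rightarrow(A,X,\varphi)$ satisfying $\omega\widetilde{g}=\widetilde{\omega}$. The commutativity part of this morphism is exactly the relation $\widetilde{g}\widetilde{\varphi}=\varphi$, which is the defining condition for $(A,\widetilde{X},\widetilde{\varphi})$ to be a lifting of $(A,X,\varphi)$ over $\widetilde{g}$. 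For the converse direction I would argue directly: if $(A,\widetilde{X},\widetilde{\varphi})$ is a lifting of $(A,X,\varphi)$ over some $h\colon\widetilde{X}\rightarrow X$, then $h\widetilde{\varphi}=\varphi$, and for $\widetilde{x}\in\ker\widetilde{\varphi}$ we get $\varphi(\widetilde{x})=h(\widetilde{\varphi}(\widetilde{x}))=h(0)=0$, so $\ker\widetilde{\varphi}\subseteq\ker\varphi$.

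The step I expect to require the most care is confirming that the morphism $\langle 1_A,\widetilde{g}\rangle$ furnished by the theorem genuinely exhibits $(A,\widetilde{X},\widetilde{\varphi})$ as a lifting of $(A,X,\varphi)$, rather than as a merely abstract morphism. The subtlety is that the action of $\widetilde{X}$ on $A$ underlying the lifting $(A,\widetilde{X},\widetilde{\varphi})$ of $(A,B,\alpha)$ is the one factoring through $\widetilde{\omega}$, whereas regarding it as a lifting of $(A,X,\varphi)$ over $\widetilde{g}$ uses the action factoring through $\widetilde{g}$ followed by $\omega$. The extra identity $\omega\widetilde{g}=\widetilde{\omega}$ delivered by the theorem is exactly what reconciles these two actions, so that the single generalized crossed module $(A,\widetilde{X},\widetilde{\varphi})$ legitimately serves in both roles and the equivalence of the two conditions follows.
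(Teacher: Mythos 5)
Your proof is correct and takes essentially the same route as the paper, which states this corollary without proof as an immediate specialization of the preceding theorem: you instantiate it exactly as intended, taking $\langle f,g\rangle=\langle 1_A,\widetilde{\omega}\rangle$ (justified by the earlier proposition) so that the theorem's kernel condition becomes $\ker\widetilde{\varphi}\subseteq\ker\varphi$, and handling the converse by the same direct kernel computation used in the theorem's proof. Your closing observation that the identity $\omega\widetilde{g}=\widetilde{\omega}$ is precisely what makes the $\widetilde{X}$-action on $A$ induced via $\widetilde{g}$ agree with the original action via $\widetilde{\omega}$ is a worthwhile detail that the paper leaves implicit.
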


\begin{corollary}
For a generalized crossed module $(A,B,\alpha)$ with two simply connected liftings $(A,X,\varphi)$ and $(A,\widetilde{X},\widetilde{\varphi})$ via $\omega$ and $\widetilde{\omega}$, respectively.Then $\ker\widetilde{\varphi}=\ker\varphi$ if and only if there is an isomorphism $\langle f,g\rangle\colon(A,X,\varphi)\rightarrow(A,\widetilde{X},\widetilde{\varphi})$ of generalized crossed modules.
\end{corollary}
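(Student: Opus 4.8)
The plan is to prove the two implications separately, using the preceding corollary---which converts a kernel inclusion between two simply connected liftings into a lifting relation---together with the surjectivity that simple connectedness provides.

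For the forward implication, assume $\ker\varphi=\ker\widetilde\varphi$. I split this equality into the two inclusions $\ker\widetilde\varphi\subseteq\ker\varphi$ and $\ker\varphi\subseteq\ker\widetilde\varphi$ and apply the preceding corollary once to each (both uses are legitimate because $(A,X,\varphi)$ and $(A,\widetilde X,\widetilde\varphi)$ are both simply connected). This produces group homomorphisms $g\colon X\to\widetilde X$ with $g\varphi=\widetilde\varphi$ and $h\colon\widetilde X\to X$ with $h\widetilde\varphi=\varphi$. The crucial step is to observe that $g$ and $h$ are mutually inverse: from $hg\varphi=h\widetilde\varphi=\varphi$ and the surjectivity of $\varphi$ I get $hg=1_X$, and symmetrically $gh=1_{\widetilde X}$ from the surjectivity of $\widetilde\varphi$. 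Hence $g$ is an isomorphism in \Gwa{}.

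It then remains to upgrade $g$ to an isomorphism $\langle 1_A,g\rangle$ of generalized crossed modules. Commutativity of the defining square is precisely the identity $g\varphi=\widetilde\varphi$ already in hand. For the action-compatibility condition $1_A(x\cdot a)=g(x)\cdot 1_A(a)$, I use that the $X$- and $\widetilde X$-actions on $A$ are induced through $\omega$ and $\widetilde\omega$, so it suffices to prove $\widetilde\omega g=\omega$; this follows from $\widetilde\omega g\varphi=\widetilde\omega\widetilde\varphi=\alpha=\omega\varphi$ and, once more, the surjectivity of $\varphi$. This delivers the required isomorphism, with identity $A$-component.

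The converse is immediate: given an isomorphism $\langle 1_A,g\rangle\colon(A,X,\varphi)\to(A,\widetilde X,\widetilde\varphi)$, its morphism condition forces $g\varphi=\widetilde\varphi$, and since $g$ is invertible I conclude $\ker\varphi=\ker(g\varphi)=\ker\widetilde\varphi$ at once. (An isomorphism with a general $A$-component $f$ would only give $f(\ker\varphi)=\ker\widetilde\varphi$, so throughout I read the isomorphism as one fixing $A$, i.e.\ an isomorphism of liftings.) I expect the only genuine obstacle to lie in the forward direction, specifically in the two surjectivity cancellations that make $g$ invertible and that yield $\widetilde\omega g=\omega$; the remaining verifications are formal.
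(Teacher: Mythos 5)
Your proof is correct and takes exactly the route the paper intends: the corollary is stated there without proof as an immediate consequence of the preceding theorem and corollary, and your argument---applying that result in both directions and cancelling against the surjections $\varphi$ and $\widetilde{\varphi}$ to get mutually inverse morphisms, then deducing $\widetilde{\omega}g=\omega$ by the same cancellation to verify action compatibility of $\langle 1_A,g\rangle$---is precisely the omitted verification. Your remark that the converse only works if the isomorphism is read as one fixing $A$ (a general $A$-component $f$ would yield only $f(\ker\varphi)=\ker\widetilde{\varphi}$, as in the analogous corollary for coverings) is a correct and useful clarification of the statement as written.
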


\begin{theorem}
Let $(A,B,\alpha)$ be a generalized crossed module with $N$ ideal of $A$ such that $N\subseteq\ker\alpha$. Then $(A,A/N,\varphi)$ is a lifting of $(A,B,\alpha)$ via $\omega$ where $\varphi(a)=a+N$ and $\omega(a+N)=\alpha(a)$. Moreover, $\ker\varphi=N$. 
\end{theorem}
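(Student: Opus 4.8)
The plan is to build the lifting componentwise, using Lemma \ref{lem:liftprexmod} to reduce the generalized crossed module axioms to a single identity. First I would confirm that $A/N$ is a legitimate object of \Gwa{}: since $N$ is an ideal of $A$ in the sense of Definition \ref{def:ideal}, the quotient carries the self-action ${}^{(a+N)}{(a_1+N)}=({}^{a}{a_1})+N$ recalled in the preliminaries, and this is precisely what makes the canonical projection $\varphi\colon A\rightarrow A/N$ a morphism in \Gwa{}, since $\varphi({}^{a}{a_1})=({}^{a}{a_1})+N={}^{\varphi(a)}{\varphi(a_1)}$.

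Next I would verify that $\omega\colon A/N\rightarrow B$, $\omega(a+N)=\alpha(a)$, is well-defined and a group homomorphism; this is the one place the hypothesis $N\subseteq\ker\alpha$ is genuinely used. If $a+N=a'+N$ then $a-a'\in N\subseteq\ker\alpha$, so $\alpha(a)=\alpha(a')$ and the value $\omega(a+N)=\alpha(a)$ is independent of the representative. Additivity follows at once from that of $\alpha$, and $\omega\varphi=\alpha$ holds by construction. At this stage all the data of a lifting over $\omega$ are in place except the generalized crossed module structure on $(A,A/N,\varphi)$.

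To supply that structure I would invoke Lemma \ref{lem:liftprexmod}: because $\omega\varphi=\alpha$ is already established, it suffices to check $\varphi(x\cdot a)={}^{x}{\varphi(a)}$ for all $x\in A/N$ and $a\in A$, where the induced action is $x\cdot a=\omega(x)\cdot a$. Writing $x=a_1+N$, condition \ref{defcond:genxmod2} of Definition \ref{def:genxmod} gives $x\cdot a=\alpha(a_1)\cdot a={}^{a_1}{a}$, so that $\varphi(x\cdot a)=({}^{a_1}{a})+N$; on the other hand the quotient self-action gives ${}^{x}{\varphi(a)}={}^{(a_1+N)}{(a+N)}=({}^{a_1}{a})+N$. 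The two sides coincide, which is the crux of the argument, although it ultimately amounts to matching two descriptions of the same coset.

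Finally, $\ker\varphi=N$ is immediate, since $\varphi(a)=0$ in $A/N$ holds exactly when $a\in N$. The only real obstacle is conceptual rather than computational: one must recognize that well-definedness of $\omega$ is precisely what $N\subseteq\ker\alpha$ provides, and that Lemma \ref{lem:liftprexmod} collapses the full set of axioms down to the single coset identity above.
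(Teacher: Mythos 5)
Your proof is correct and follows essentially the same route as the paper: both reduce the generalized crossed module axioms via Lemma \ref{lem:liftprexmod} to the single identity $\varphi(x\cdot a)={}^{x}{\varphi(a)}$ and verify it by the same computation $\varphi((a_1+N)\cdot a)=\varphi(\alpha(a_1)\cdot a)=\varphi({}^{a_1}{a})={}^{\varphi(a_1)}{\varphi(a)}$. You additionally spell out the well-definedness of $\omega$ (the one use of $N\subseteq\ker\alpha$), the \Gwa{}-structure on $A/N$ coming from $N$ being an ideal, and $\ker\varphi=N$, all of which the paper dismisses as easy or leaves implicit.
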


\begin{proof}
It is easy to see that $\omega\varphi=\alpha$. Thus, it is sufficient to show that the conditions for Lemma \ref{lem:liftprexmod} are satisfied. 
Then for any $a,a_1\in A$ we have
\begin{alignat*}{2}
	{\varphi((a+N)\cdot a_1)} & = {\varphi(\alpha(a)\cdot a_1)} \\
	& = {\varphi({}^{a}{a_1})} \\
	& = {}^{\varphi(a)}{\varphi(a_1)} \\
	& = {}^{a+N}{\varphi(a_1)}
\end{alignat*}
which completes the proof.
\end{proof}

\begin{theorem}
Let $(A,B,\alpha)$ be a generalized crossed module. Then there is a natural equivalence  between the category $\mathsf{\bf Cov_{GXMod}}(A,B,\alpha)$ of covering generalized crossed modules of $(A,B,\alpha)$ and the category $\mathsf{\bf LGXM}/(A,B,\alpha)$ of lifting generalized crossed modules of $(A,B,\alpha)$.
\end{theorem}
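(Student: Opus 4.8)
The plan is to set up two functors
\[
F\colon \mathsf{\bf Cov_{GXMod}}(A,B,\alpha)\longrightarrow \mathsf{\bf LGXM}/(A,B,\alpha),
\qquad
G\colon \mathsf{\bf LGXM}/(A,B,\alpha)\longrightarrow \mathsf{\bf Cov_{GXMod}}(A,B,\alpha),
\]
and then to produce natural isomorphisms $GF\cong \mathrm{Id}$ and $FG\cong\mathrm{Id}$. The work splits into an object part, which is essentially free, and a morphism part, which is where the real content lies.

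First I would build $G$. On objects, a lifting $(A,X,\varphi)$ over $\omega\colon X\to B$ is sent to the covering $\langle 1_A,\omega\rangle\colon (A,X,\varphi)\to(A,B,\alpha)$; this is legitimate because $1_A$ is an isomorphism and $\langle 1_A,\omega\rangle$ was already shown to be a morphism of generalized crossed modules in the proposition preceding this section. On a morphism $f\colon X\to X'$ of liftings I would set $G(f)=\langle 1_A,f\rangle$. Here the action condition $1_A(x\cdot a)=f(x)\cdot 1_A(a)$ holds automatically, since both sides reduce to $\omega(x)\cdot a$ once one uses that the $X$- and $X'$-actions on $A$ factor through $\omega$ and $\omega'$ together with $\omega'f=\omega$; functoriality is then immediate from $\langle 1_A,\omega'\rangle\langle 1_A,f\rangle=\langle 1_A,\omega\rangle$.

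Next I would build $F$. Given a covering $\langle f,g\rangle\colon (\wt A,\wt B,\wt\alpha)\to(A,B,\alpha)$ with $f$ an isomorphism, I would transport the structure of $(\wt A,\wt B,\wt\alpha)$ along $f^{-1}$ using Lemma \ref{lem:isogxmod2}, obtaining $(A,\wt B,\wt\alpha f^{-1})$. The covering square gives $g(\wt\alpha f^{-1})=\alpha f f^{-1}=\alpha$, and the morphism identity $f(\wt b\cdot\wt a)=g(\wt b)\cdot f(\wt a)$ shows that the transported action of $\wt B$ on $A$ is exactly the action through $g$; hence $(A,\wt B,\wt\alpha f^{-1})$ is a lifting of $(A,B,\alpha)$ over $g$, and I define $F(\langle f,g\rangle)$ to be it. On a morphism $\langle f_0,g_0\rangle$ of coverings I would set $F(\langle f_0,g_0\rangle)=g_0$, which is a morphism of liftings since the defining equation of a morphism of coverings yields $g'g_0=\wt g$. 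For the composites, applying $F$ to $\langle 1_A,\omega\rangle$ returns $(A,X,\varphi\,1_A^{-1})=(A,X,\varphi)$ over $\omega$, so $FG=\mathrm{Id}$ on the nose; and $GF$ sends $(\wt A,\wt B,\wt\alpha)$ to $\langle 1_A,g\rangle\colon (A,\wt B,\wt\alpha f^{-1})\to(A,B,\alpha)$, to which $\langle f,1_{\wt B}\rangle$ is an isomorphism of coverings, natural in the covering, because $1_{\wt B}\wt\alpha=(\wt\alpha f^{-1})f$ and the actions match by construction.

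I expect the main obstacle to be entirely at the level of morphisms: one must confirm that the two morphism notions correspond bijectively. Concretely, for $G(f)=\langle 1_A,f\rangle$ to be a genuine morphism of generalized crossed modules one needs the commutativity $f\varphi=\varphi'$, i.e. the triangle of the lifting-morphism diagram, and not merely $\omega'f=\omega$; this is automatic when $\omega'$ is a monomorphism but must otherwise be read off from the definition and tracked carefully. Dually, fullness and faithfulness of $F$ rely on the earlier lemma that a morphism between two coverings is itself a covering morphism whose $A$-component is forced to equal $(f')^{-1}\wt f$, which pins down that component and reduces every comparison to the single base equation $g'g_0=\wt g$. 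Reconciling these two commutativity requirements — the covering side asking compatibility with $\wt\alpha,\alpha'$ and the lifting side asking compatibility with $\varphi,\varphi'$ — is the delicate step, and everything else is routine verification.
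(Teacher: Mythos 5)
Your construction coincides with the paper's own proof: the same pair of functors (a lifting $(A,X,\varphi)$ over $\omega$ is sent to the covering $\langle 1_A,\omega\rangle$; a covering $\langle f,g\rangle$ is sent to the lifting $(A,\widetilde{B},\widetilde{\alpha}f^{-1})$ over $g$; with morphism assignments $f\mapsto\langle 1_A,f\rangle$ and $\langle \widetilde{f},\widetilde{g}\rangle\mapsto\widetilde{g}$), so your proposal is essentially the paper's argument. You in fact supply more than the paper does, which dismisses the natural isomorphisms, the identity $\widetilde{g}\widetilde{\alpha}f^{-1}=\alpha'(f')^{-1}$, and the check that the transported $\widetilde{B}$-action on $A$ agrees with the $g$-induced action as ``easy calculations,'' and you correctly flag the one real subtlety the paper passes over silently: $\langle 1_A,f\rangle$ being a morphism of generalized crossed modules requires $f\varphi=\varphi'$, which follows from $\omega'f=\omega$ only when $\omega'$ is a monomorphism or when the definition of a lifting morphism is read as demanding commutativity of the full diagram including $\varphi$ and $\varphi'$.
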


\begin{proof}
It is easy to see that if $(A,X,\varphi)$ is a lifting of $(A,B,\alpha)$ over $\omega\colon X\rightarrow B$, then $\langle1_A,\omega\rangle\colon(A,X,\varphi)\rightarrow (A,B,\alpha)$ is a generalized crossed module morphism. Since $1_A$ is an isomorphism then $(A,X,\varphi)$ is a covering of $(A,B,\alpha)$. Moreover, let $(\wt{A},\wt{X},\wt{\varphi})$ be another lifting of $(A,B,\alpha)$ over $\wt{\omega}\colon\wt{X}\rightarrow B$ and let $f\colon X\rightarrow \wt{X}$ be a morphism in $\mathsf{\bf LGXM}/(A,B,\alpha)$, then $\langle1_A,f\rangle\colon(A,X,\varphi)\rightarrow(A,\widetilde{X},\widetilde{\varphi})$ is a morphism in $\mathsf{\bf Cov_{GXMod}}(A,B,\alpha)$. This construction defines a functor $\mathsf{\bf LGXM}/(A,B,\alpha)\rightarrow\mathsf{\bf Cov_{GXMod}}(A,B,\alpha)$.
	
Conversely if $\langle f,g\rangle\colon(\wt{A},\wt{B},\wt{\alpha})\rightarrow (A,B,\alpha)$ is a covering morphism of generalized crossed modules, then $f\colon \wt{A}\rightarrow A$ is an isomorphism and if we take $\varphi=\wt{\alpha}f^{-1}$ then $(A,\wt{B},\wt{\varphi})$ becomes a lifting of $(A,B,\alpha)$ over $g$. It is easy to see that $g\varphi=\alpha$ and $(A,\widetilde{B},\varphi)$ is a generalized crossed module. Further  if $\langle f',g'\rangle\colon(A',B',\alpha')\rightarrow (A,B,\alpha)$ is another covering morphism and $\langle\wt{f},\wt{g}\rangle\colon(\wt{A},\wt{B},\wt{\alpha})\rightarrow (A',B',\alpha')$ is a morphism in $\mathsf{\bf Cov_{GXMod}}(A,B,\alpha)$ then $\wt{g}\colon\widetilde{B}\rightarrow  B'$ is a morphism in $\mathsf{\bf LGXM}/(A,B,\alpha)$. This construction defines a functor $\mathsf{\bf Cov_{GXMod}}(A,B,\alpha)\rightarrow\mathsf{\bf LGXM}/(A,B,\alpha)$. Other details can be proven by easy calculations.
\end{proof}

\end{document}